\newtheorem{definition}{Definition}[section]
\newtheorem{lemma}{Lemma}[section]
\newtheorem{thm}{Theorem}[section]
\newtheorem{prop}{Proposition}[section]
\theoremstyle{remark}
\newtheorem{remark}{Remark}[section]
\numberwithin{equation}{section}
\def\tr{\textmd{tr}}
\def\div{\mathrm{div}}
\def\R{\mathbb{R}}
\def\R{\mathbb{R}}
\def\vh{\vspace{.2cm}}
\def\p{\partial}
\def\So{\Sigma_o}
\def\Sh{\Sigma_h}
\def\hOmega{\hat{\Omega}}
\def\mh{\mathfrak{m}_{_H}}
\def\W{\mathcal{W}}
\newcommand{\be}{\begin{equation}}
\newcommand{\ee}{\end{equation}}
\newcommand{\bee}{\begin{equation*}}
\newcommand{\eee}{\end{equation*}}
\begin{document}

\title{On compact $3$-manifolds with nonnegative scalar curvature with a CMC boundary component}

\author{Pengzi Miao}
\address[Pengzi Miao]{Department of Mathematics, University of Miami, Coral Gables, FL 33146, USA.}
\email{pengzim@math.miami.edu}
%\thanks{$^1$Research partially supported by Simons Foundation Collaboration Grant for Mathematicians \#281105.}

\author{Naqing Xie}
\address[Naqing Xie]{School of Mathematical Sciences, Fudan
University, Shanghai 200433, China.}
\email{nqxie@fudan.edu.cn}
%\thanks{$^2$Research partially supported by the National Science Foundation of China \#11671089, \#11421061.}

\subjclass[2010]{Primary 53C20; Secondary 83C99}

\keywords{Scalar curvature; CMC surfaces; Riemannian Penrose inequality}

%\thanks{The first named author's research was partially supported by Simons Foundation Collaboration Grant for Mathematicians \#281105.  The second named author's research was partially supported by the National Science Foundation of China \#11671089, \#11421061.}

\begin{abstract}
We apply the Riemannian Penrose inequality and the Riemannian positive mass theorem 
 to  derive  inequalities on the boundary of a class of compact Riemannian  $3$-manifolds with nonnegative scalar curvature.
The boundary of such a manifold has a  CMC component, i.e. a $2$-sphere with positive constant mean curvature;
and the rest of the boundary, if nonempty, consists of  closed minimal surfaces. 
A key  step in our proof  is  the construction of a collar extension  that  is 
inspired by the method of Mantoulidis-Schoen \cite{M-S}.
\end{abstract}

\maketitle

\markboth{Pengzi Miao and Naqing Xie}{$3$-manifolds with a CMC boundary component}

\section{Introduction and statement of results}
In this paper, we are interested in a  compact Riemannian $3$-manifold $\Omega$ with 
nonnegative scalar curvature, with boundary $ \p \Omega$, such that 
$\p \Omega $ has a component $ \So$ that is  a topological  $2$-sphere with positive  mean curvature.
When $ \p \Omega \setminus \So \neq \emptyset $, we  assume that 
$\p \Omega \setminus \So $ is the unique, closed minimal surface (possibly disconnected) in $\Omega$, i.e. 
there are no other closed minimal surfaces in $\Omega$. 
In this case, we denote $ \p \Omega \setminus \So $ by $ \Sh$. 
In a relativistic context, such an $\Omega$ represents a finite body in a time-symmetric  initial data set, surrounding 
the apparent horizon modeled by  $\Sh$. 

Motivated by the  quasi-local mass problem (cf. \cite{Penrose-qlmass}), 
we want to understand the effect of nonnegative scalar curvature and the existence of $\Sh$ on the boundary 
geometry of $\So$. To be more precise, let $g$ denote the induced metric on $ \So$ and $H$ be the mean curvature 
of $ \So$ in $\Omega$. We want to understand the restriction imposed by the scalar curvature and the horizon boundary 
$\Sh$ on the pair $(g, H)$. 

A special case of this question  was studied in \cite{Miao09}. It was proved in \cite{Miao09} that 
\bee
(\So, g) \ \mathrm{is \ a \ round \ sphere}  \Rightarrow
 \sqrt{ \frac{ | \So | }{ 16 \pi } }
\left[ 1 - \frac{ 1 }{ 1 6 \pi | \So | } \left( \int_{\So} H  d \sigma
\right)^2 \right] \geq \sqrt{ \frac{ | \Sh | }{ 16 \pi } }  ,
\eee
where  $ | \So |$, $ | \Sh|$ are the area of $\So$, $ \Sh$, respectively, and $ d \sigma $ denotes  the area element on $ \So$.
The left side of the above inequality  closely resembles the Hawking mass \cite{Hawking}
of $ \So$ in $ \Omega$, given by
\bee
\mh ( \So ) = \sqrt{ \frac{ | \So | }{ 16 \pi } }
\left[ 1 - \frac{ 1 }{ 1 6 \pi  }  \int_{\So} H^2 \ d \sigma  \right] .
\eee
The Hawking mass
functional $ \mh  (\cdot) $ played a key role in Huisken and Ilmanen's proof
of  the Riemannian Penrose inequality (cf. \cite{Bray01, H-I01})
 when the horizon is connected. In particular,  by the results in \cite{H-I01},
if a weak solution  $ \{ \Sigma_t \}$ consisting of connected surfaces
to the inverse mean curvature flow with initial condition $\Sh$ exists in $ \Omega$
and  if  $ \So$ happens to be a leaf  in $ \{ \Sigma_t \}$,
then one would have  $ \mh (\So ) \ge \sqrt{ \frac{ | \Sh |}{16 \pi} } $.

In general, without imposing suitable conditions on $\So$, 
one should not expect to have  $ \mh (\So ) \ge \sqrt{ \frac{ | \Sh |}{16 \pi} } $ since $ \mh (\So)$ 
may even fail to be positive. 
On the other hand,  if a $2$-surface is a stable  constant mean curvature (CMC) surface 
in a $3$-manifold with nonnegative scalar curvature, 
Christodoulou and Yau \cite{C-Y}  showed  that its Hawking mass is always nonnegative.

In this paper, we consider an  $\Omega$ in which $ \So$ is a
CMC surface. We have

\begin{thm} \label{thm-main-1}
Let $ \Omega$ be a compact, orientable, Riemannian $3$-manifold with boundary $ \p \Omega$.
Suppose $ \p \Omega$ is  the disjoint union of  $ \Sigma_o $ 
and $\Sh$ such that 
\begin{itemize}
\item[(a)] $\Sigma_o$ is  a topological $2$-sphere with constant mean curvature $H_o > 0$; 
\item[(b)] $\Sh$, which may have multiple components,  is a minimal surface; and
\item[(c)] there are no other closed minimal surfaces in $\Omega$.
\end{itemize}
Suppose  $\Omega $ has nonnegative scalar curvature and  the induced metric $g$ on $ \Sigma_o$ 
 has positive Gauss curvature.
 There exists  a quantity $ 0 <  \eta (g) \le \infty$,  uniquely determined   by $(\Sigma_o, g)$ and 
 invariant under   scaling of $g$, such that if
\bee 
 \mathcal{W} : =  \frac{1}{16 \pi} \int_{\Sigma_o} H_o^2 d \sigma  < \eta (g), 
 \eee
then
\be \label{eq-main-1}
 \sqrt{ \frac{ | \Sh |}{16 \pi} } \le  
\left[    \frac{    \mathcal{W} } {  \eta (g) -   \mathcal{W} }  \right]^\frac12  \sqrt{ \frac{ | \Sigma_o  |}{16 \pi} }
+ \mh (\So)  .
\ee
Here  $ \eta (g) = \infty  $ if  $g$ is a round metric. In this case,  \eqref{eq-main-1}  reduces to
$ \sqrt{ \frac{ | \Sh |}{16 \pi} }  \le  \mh (\Sigma_o)  $.
\end{thm}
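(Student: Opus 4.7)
The strategy, inspired by Mantoulidis--Schoen \cite{M-S}, is to extend $\Omega$ outward across $\Sigma_o$ first by a collar carrying a metric of nonnegative scalar curvature and then by a Schwarzschild exterior, producing a complete asymptotically flat $3$-manifold $(\tilde\Omega, \tilde g)$ with nonnegative scalar curvature whose only closed minimal surface is $\Sigma_h$. The Riemannian Penrose inequality applied to $(\tilde\Omega,\tilde g)$ then yields $m_{ADM}(\tilde g) \ge \sqrt{|\Sigma_h|/(16\pi)}$, and the right-hand side of \eqref{eq-main-1} will appear as an explicit upper bound for $m_{ADM}(\tilde g)$ in terms of the initial data $(g, H_o)$ and $\eta(g)$.

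The core technical step is the construction of the collar $N = [0,1] \times \Sigma_o$, which I plan to equip with a warped-product-type metric of the form
$$\bar g \; = \; \phi(t)^{2}\, dt^{2} \, + \, A(t)^{2}\, g(t),$$
where $g(t)$ is a smooth path of metrics on $\Sigma_o$ with positive Gauss curvature interpolating between $g(0) = g$ and a round metric $g(1)$ of unit area, and $A(t), \phi(t) > 0$ are warping functions to be determined. As in \cite{M-S}, a direct computation expresses the scalar curvature of $\bar g$ as a sum of: a term proportional to the Gauss curvature of $g(t)$; a Hamiltonian-constraint-type ODE expression in $A, \phi$; and a defect term quadratic in $\dot g(t)$. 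Positivity of the Gauss curvature of $g(t)$ supplies the budget against which the defect must be paid. I define $\eta(g)$ by optimizing a scale-invariant variational quantity over admissible paths: it measures how efficiently $(\Sigma_o, g)$ can be deformed to round while preserving positive Gauss curvature, and is infinite exactly when $g$ itself is round (a constant path costs nothing).

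Having $N$, I glue its inner boundary to $\Omega$ along $\Sigma_o$ by setting $A(0) = 1$ and choosing $\phi(0)$ so that the induced inner mean curvature equals $H_o$; I glue the outer round boundary to a Schwarzschild exterior of some mass $m_+$ by matching area and mean curvature with a coordinate sphere above the Schwarzschild horizon. Using Shi--Tam type distributional-scalar-curvature gluing across the two interfaces (each requires only a mean-curvature inequality across a $C^0$ match of metrics), $(\tilde\Omega, \tilde g)$ is asymptotically flat with nonnegative scalar curvature. Hypothesis (c) combined with the outermost-minimal-surface principle ensures that $\Sigma_h$ is the outermost horizon in $\tilde\Omega$, so the Riemannian Penrose inequality applies and gives $m_+ \ge \sqrt{|\Sigma_h|/(16\pi)}$. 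A monotonicity/defect identity for the Hawking mass along the slices of $N$, when integrated, should yield
$$m_+ \; \le \; \mh(\Sigma_o) + \sqrt{\frac{|\Sigma_o|}{16\pi}} \left[\frac{\mathcal{W}}{\eta(g)-\mathcal{W}}\right]^{\frac12},$$
which is precisely the bound required.

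The main obstacle I anticipate is the precise definition and identification of $\eta(g)$: it must simultaneously be (i) the sharp solvability threshold for the ODE system governing $(A, \phi)$ on $[0,1]$, (ii) intrinsic to $(\Sigma_o, g)$ and invariant under scaling, and (iii) equal to $+\infty$ only when $g$ is round. The condition $\mathcal{W} < \eta(g)$ should correspond exactly to the threshold at which positive warping solutions cease to exist, equivalently at which the outer round boundary would cross the Schwarzschild horizon and the gluing break down. The remaining components---the scalar-curvature calculation on $N$, the two Shi--Tam type gluings, the Hawking-mass identity along the collar, and the application of RPI---follow templates by now well established in the area.
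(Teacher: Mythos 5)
Your proposal follows essentially the same strategy as the paper: a Mantoulidis--Schoen type collar glued to $\Omega$ at $\Sigma_o$, an outer boundary that is round and CMC, and an application of the Riemannian Penrose inequality (the paper's main argument invokes the localized version from \cite{Miao09} directly on the compact manifold $\hat\Omega$, but it also presents the Schwarzschild-capping route you describe as an equivalent alternative). The variational description of $\eta(g)$ as a supremum over admissible paths of metrics is likewise the paper's Definition~\ref{df-eta}.

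Two technical devices you leave open are what make the bound come out in the stated form, so they are worth flagging. First, the path $\{g(t)\}$ must be put into the divergence-free gauge $\tr_{g(t)} g'(t) = 0$ by pulling back along a $t$-dependent flow of diffeomorphisms (Lemma~\ref{lem-ht-2-gt}, taken from \cite{M-S}); this gauge is what reduces the collar scalar curvature to the explicit expression \eqref{eq-R-gamma-3} and isolates the two path-invariants $\beta = \min r_o^2 K(g(t))$ and $\alpha = \max\frac14|g'|^2_g$. Second, rather than unspecified warpings $\phi(t), A(t)$, the paper takes a \emph{constant} lapse $A$ together with a fiberwise scaling modeled on the Schwarzschild profile $u_m$ — with the mass parameter $m \le 0$ allowed — and then passes to the limit $m \to -\infty$ (so $k \to 0$); it is precisely that limit which produces the coefficient $\sqrt{\mathcal{W}/(\eta(g)-\mathcal{W})}\cdot\sqrt{|\Sigma_o|/(16\pi)}$ with $\eta(g) = \sup_{\{g(t)\}}\beta/\alpha$. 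One small correction to your heuristic: the threshold $\mathcal{W} < \eta(g)$ governs nonnegativity of $R(\gamma)$ inside the collar (the defect $\tfrac14|g'|_g^2$ must stay below the Gauss-curvature budget), not whether the outer sphere crosses a Schwarzschild horizon; the outer slice $\Sigma_1$ always has positive constant mean curvature by construction and is matched to a coordinate sphere strictly outside the horizon by choosing the exterior mass to equal $\mh(\Sigma_1)$.
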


Theorem \ref{thm-main-1}  has  the following analogue when $ \p \Omega = \So$. 

\begin{thm} \label{thm-main-1-1}
Let $ \Omega$ be a compact, Riemannian $3$-manifold with nonnegative scalar curvature, 
with boundary $ \So$.
Suppose $ \So $ is a topological $2$-sphere with constant mean curvature $H_o > 0$.
Suppose   the induced metric $g$ on $ \So$  has positive Gauss curvature.
 Let $ \eta (g)$ be the scaling invariant of $(\So, g)$ stated  in Theorem \ref{thm-main-1}. 
If 
\bee 
 \mathcal{W} : =  \frac{1}{16 \pi} \int_{\Sigma_o} H_o^2 d \sigma  < \eta (g), 
 \eee
then
\be \label{eq-main-1-1}
\left[    \frac{    \mathcal{W} } {  \eta (g) -   \mathcal{W} }  \right]^\frac12  \sqrt{ \frac{ | \Sigma_o  |}{16 \pi} }
+ \mh (\So)  \ge 0 . 
\ee
\end{thm}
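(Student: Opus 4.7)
The plan is to derive Theorem \ref{thm-main-1-1} from Theorem \ref{thm-main-1} by a case split on whether $\Omega$ contains a closed minimal surface. If it does, let $\widetilde{\Sigma}_h$ denote an outermost such surface (possibly disconnected: the union of all outermost minimal surfaces). Then the subregion $\widetilde{\Omega} \subset \Omega$ bounded by $\Sigma_o$ and $\widetilde{\Sigma}_h$ contains no other closed minimal surface and satisfies all hypotheses of Theorem \ref{thm-main-1} with $\widetilde{\Sigma}_h$ playing the role of $\Sh$. Since the boundary data $(g, H_o)$ on $\Sigma_o$ is unchanged, so are $\mathcal{W}$, $\eta(g)$, and $\mh(\Sigma_o)$, and applying Theorem \ref{thm-main-1} to $\widetilde{\Omega}$ together with the trivial bound $\sqrt{|\widetilde{\Sigma}_h|/(16\pi)} \ge 0$ immediately yields \eqref{eq-main-1-1}.

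In the remaining case, when $\Omega$ contains no closed minimal surface, I would re-run the collar-extension construction behind Theorem \ref{thm-main-1} with the Riemannian positive mass theorem in place of the Riemannian Penrose inequality. Concretely: glue to $\Omega$ along $\Sigma_o$ a Mantoulidis--Schoen-type collar $(N, \bar g)$ of nonnegative scalar curvature whose inner boundary is $(\Sigma_o, g, H_o)$ and whose outer boundary is a round sphere of some area $A_1 > |\Sigma_o|$ with constant positive mean curvature $H_1$, then cap off with an exterior Schwarzschild manifold of mass $m$ chosen so that the area and the mean curvature at the gluing sphere match. A direct computation shows that $m$ equals the Hawking mass of this round outer boundary, and the Hawking-mass estimate along the collar carried out in the proof of Theorem \ref{thm-main-1} supplies the upper bound
\begin{equation*}
m \le \left[ \frac{\mathcal{W}}{\eta(g) - \mathcal{W}} \right]^{\frac12} \sqrt{\frac{|\Sigma_o|}{16\pi}} + \mh(\Sigma_o).
\end{equation*}
The assembled manifold is asymptotically flat with nonnegative scalar curvature and, by construction, contains no closed minimal surface, so the Riemannian positive mass theorem gives $m \ge 0$; combining the two inequalities yields \eqref{eq-main-1-1}.

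The main technical point in the second case is to ensure that the assembled manifold contains no closed minimal surface (so the positive mass theorem applies in its standard horizon-free form) and that the two $C^0$-gluing surfaces do not violate the distributional scalar-curvature sign. The former holds because $\Omega$ is minimal-surface-free by assumption, the Mantoulidis--Schoen collar can be arranged so that every level set has strictly positive mean curvature, and the Schwarzschild piece is attached strictly outside its own horizon; the latter is standard for corner gluings with matching mean curvatures on both sides. Both issues are of the same nature as those already handled in the proof of Theorem \ref{thm-main-1}, so the new content beyond Theorem \ref{thm-main-1} is essentially bookkeeping.
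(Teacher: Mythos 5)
Your proposal is correct, but the case split is unnecessary and rests on a misconception about what the positive mass theorem requires. The paper proves Theorem \ref{thm-main-1-1} in a single step: attach the collar $(N,\gamma)$ to $\Omega$ along $\Sigma_o$, cap with the exterior Schwarzschild piece $N_\infty$ of mass $m=\mh(\Sigma_1)$, and apply the corner positive mass theorem (Miao \cite{Miao02}, Shi--Tam \cite{ShiTam02}) to get $\mh(\Sigma_1)\ge 0$; this is Remark \ref{rem-empty-sh}, and it holds regardless of whether $\Omega$ contains closed minimal surfaces, because the positive mass theorem on a complete asymptotically flat manifold (or its corner version after mollification) has no hypothesis forbidding interior closed minimal surfaces. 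Your worry that the assembled manifold must be minimal-surface-free ``so the positive mass theorem applies in its standard horizon-free form'' conflates the PMT with the Penrose inequality; only the latter needs care about outermost horizons.

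Your Case 1 (take the outermost minimal surface $\widetilde{\Sigma}_h$, apply Theorem \ref{thm-main-1} to the subregion, and discard the nonnegative term $\sqrt{|\widetilde{\Sigma}_h|/16\pi}$) is a valid alternative route to the same inequality when minimal surfaces are present, and it has the virtue of reusing Theorem \ref{thm-main-1} as a black box. But it adds nothing that the direct PMT argument does not already give, and it introduces a small mismatch: Theorem \ref{thm-main-1} assumes $\Omega$ is orientable while Theorem \ref{thm-main-1-1} does not, so if you go this way you would either need to add that hypothesis or argue separately that it can be dropped. Your Case 2 is essentially the paper's proof verbatim once the spurious minimal-surface concern is removed; the remaining points you flag (positive mean curvature of every collar level set, attaching the Schwarzschild end outside its horizon, the distributional scalar curvature sign at the two corners) are all correctly identified and are handled exactly as in the proofs of Theorems \ref{thm-section-m-n} and \ref{thm-section-m-p}.
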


\vh 

The quantity  $ \eta(g)$ measures how far $g$ is different from a round metric on $ \So$.
We will give its precise definition  in Section \ref{sec-eta}.
For now  we  give a few  remarks  on  Theorems \ref{thm-main-1} and \ref{thm-main-1-1}.

\begin{remark} \label{rem-eta-shi-tam}
For a fixed $ \delta \in (0,1)$,
it is proved in Proposition \ref{prop-est-eta} that
\be \label{eq-est-eta-intro}
\eta (g) \ge \frac{C}{ || g - g_o ||^{2}_{C^{0, \delta} (\So) } }
\ee
for some positive constant $ C $ independent on $g$ if $ g $ is $C^{2,\delta} $-close to a   round metric $g_o$  on $ \So$.
In particular, $ \eta (g)$ tends to $\infty$ as  $g$ approaches  $g_o$ in the $C^{2, \delta}$-norm.
On the other hand, 
given an $ \Omega$   in Theorem \ref{thm-main-1}, 
 by  Shi and Tam's result \cite[Theorem 1]{ShiTam02} (or more precisely by their proof), one has
 \bee
 \int_{\So} H_o d \sigma < \int_{\So} H_{_E} d \sigma ,
 \eee
 where $H_{_E}$ is the mean curvature of the isometric embedding of $ \So$ in  $ \R^3$.
 Consequently,
 \bee 
 \mathcal{W} < \omega (g) : = \frac{1}{16 \pi | \So |}  \left(  \int_{\So} H_{_E} d \sigma \right)^2 .
 \eee
Therefore,  the condition  $ \mathcal{W} < \eta (g) $ is automatically satisfied  
if   $ \omega (g) \le  \eta (g) $.  By  \eqref{eq-est-eta-intro},  
this  is true  if $ g$ is $C^{2,\delta}$-close to a round metric. 
\end{remark}

\begin{remark}
Given an $ \Omega$   in Theorem \ref{thm-main-1-1}, 
 one knows  $ \mathcal{W} < \eta (g) $ always holds  if 
$g$ is $C^{2,\delta}$-close  to a round metric for the reason explained in Remark \ref{rem-eta-shi-tam}.
Therefore,  inequality \eqref{eq-main-1-1} is  true 
for any CMC surface $\Sigma$ bounding a compact $3$-manifold with nonnegative scalar curvature, provided 
the induced metric on $\Sigma$ is sufficiently round. This may be compared with the  
result of Christodoulou and Yau \cite{C-Y}  
which gives $ \mh (\Sigma ) \ge 0 $ under the 
extrinsic curvature condition.
%stability  assumption. 
\end{remark}

\begin{remark}
On an asymptotically flat $3$-manifold $M$,
there exist foliations  by CMC spheres near infinity
(cf. \cite{HY96, Ye96, Met07, Hua10, EM12, Nerz-CMC-AF}).
For instance,   Nerz \cite{Nerz-CMC-AF}  obtained    the existence and uniqueness of such a foliation
   without assuming asymptotic symmetry conditions. 
Let  $ \{ \Sigma_\sigma \}_{\sigma > \sigma_0} $ be  a foliation of CMC spheres near infinity of $M$  and
suppose $\p M$ consists of outermost minimal surfaces.
Let $ \Omega_\sigma $ be the region  bounded by $ \Sigma_\sigma $ and $ \p M$.
Let $ g_\sigma$ be the induced metric on $ \Sigma_\sigma$.
If $M$ is $C^{2, \delta}_{\tau}$-asymptotically flat with decay rate $\tau > \frac12$,
 it follows from Nerz's work (cf. \cite[Proposition 4.4]{Nerz-CMC-AF})
 that, upon pulling-back to $S^2$, the rescaled metric $\tilde g_\sigma : = \sigma^{-2} g_\sigma $
satisfies\footnote{We  thank Christopher Nerz  for explaining 
this estimate  along the CMC foliation.}
\bee  \label{eq-Nerz}
|| \tilde g_\sigma - g_* ||_{C^{2,\delta} (S^2) }  \le C \sigma^{-\tau}
\eee
for some fixed round metric $g_*$  of area $4\pi$ and  a  constant $C$  independent on $\sigma$.
Thus, along  $\{ \Sigma_\sigma \}$,
$ \mathcal{W}  = 1 + O (\sigma^{-\tau} )$
while $  \eta (g_\sigma)  \to \infty $ by \eqref{eq-est-eta-intro}.
Hence, Theorem \ref{thm-main-1} is applicable to $ \Omega_\sigma$ for large $\sigma$.
However,  our estimate  of $\eta (g)$ in \eqref{eq-est-eta-intro}   
is not strong enough to imply 
$   \left[      \frac{    \mathcal{W} } {  \eta ( \tilde g_\sigma ) -   \mathcal{W} }  \right]^\frac12 
\sqrt{ \frac{ | \Sigma_\sigma  |}{16 \pi} }   \to 0$
along $ \{ \Sigma_\sigma \}$. If this  could be shown, then one would recover 
the Riemannian Penrose inequality by  taking limit of \eqref{eq-main-1}  
since the Hawking mass $ \mh (\Sigma_\sigma)$ approaches to 
the ADM mass  \cite{ADM} along $ \{ \Sigma_\sigma \}$. 
\end{remark}

When $ \p \Omega = \So \cup \Sh $, 
we have another result  separate from Theorem \ref{thm-main-1}. 

\begin{thm} \label{thm-main-2}
Let $ \Omega$ be a compact, orientable, Riemannian $3$-manifold with boundary $ \p \Omega$.
Suppose $ \p \Omega$ is  the disjoint union of  $ \Sigma_o $ 
and $\Sh$ such that 
\begin{itemize}
\item[(a)] $\Sigma_o$ is  a topological $2$-sphere with constant mean curvature $H_o > 0$; 
\item[(b)] $\Sh$, which may have multiple components,  is a minimal surface; and
\item[(c)] there are no other closed minimal surfaces in $\Omega$.
\end{itemize}
Suppose  $\Omega $ has nonnegative scalar curvature and  the induced metric $g$ on $ \Sigma_o$ 
 has positive Gauss curvature.
 There exist  constants  $ 0 <  \beta_g  \le 1 $ and $\alpha_g  \ge 0 $,  determined   by $(\So, g)$,
such that if
\bee 
\mathcal{W}  \coloneqq \frac{1}{16 \pi} \int_{\Sigma_o} H_o^2 d \sigma  < \frac{ \beta_g }{1 + \alpha_g  } ,
\eee
then
\be \label{eq-main-2} 
\sqrt{ \frac{ | \Sh |}{16 \pi} } \le  
\left[ \left(  \frac{   \alpha_g  \mathcal{W} } {  \beta_g -   ( 1 + \alpha_g ) \mathcal{W} }  \right)^\frac12
+ 1 \right]   \mh (\So) .
\ee
If $g$ is a round metric,  one can take $ \beta_g  = 1 $ and $ \alpha_g  = 0 $. In this case, \eqref{eq-main-2} reduces to
$ \sqrt{ \frac{ | \Sh |}{16 \pi} }  \le  \mh (\Sigma_o)  $.
\end{thm}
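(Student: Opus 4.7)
The plan is to follow the same overall strategy as for Theorem \ref{thm-main-1}, namely attach to $\Omega$ along $\So$ a Mantoulidis--Schoen type collar, cap it off by a rotationally symmetric Schwarzschild end, and invoke the Riemannian Penrose inequality, but to perform the mass bookkeeping in a multiplicatively homogeneous way so that the final estimate factors through $\mh(\So)$ rather than through $|\So|$.

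First, I would construct a warped-product collar $(\So \times [0,T], \bar g)$ of the form
\bee
\bar g = r(t)^2 g(t) + h(t)^2 \, dt^2 ,
\eee
where $\{g(t)\}_{t \in [0,T]}$ is a path of metrics on $S^2$ with $g(0) = g$ and $g(T)$ round; such a path exists because the space of metrics of positive Gauss curvature on $S^2$ is path-connected. The scalars $r(t), h(t)$ are solutions of a Riccati-type ODE system arranged so that (i) $\bar g$ has nonnegative scalar curvature, (ii) the inner boundary $\{t=0\}$ is isometric to $(\So, g)$ with mean curvature $H_o$ when viewed from the collar, and (iii) the outer boundary $\{t=T\}$ is a round sphere. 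On the outer boundary I would then glue a Schwarzschild half-space of a uniquely determined mass $m > 0$ matching the area and constant mean curvature.

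After the standard smoothing of the Lipschitz corners along $\So$ and $\{t=T\}$, the assembled manifold is asymptotically flat, has nonnegative scalar curvature, and has $\Sh$ as its (unique) outermost minimal surface by hypothesis~(c). The Riemannian Penrose inequality \cite{Bray01, H-I01} therefore yields
\bee
\sqrt{\tfrac{|\Sh|}{16\pi}} \le m .
\eee
The remaining task is to prove an upper bound of the form \eqref{eq-main-2} for $m$. Writing
\bee
\sqrt{\tfrac{|\So|}{16\pi}} = \frac{\mh(\So)}{1 - \mathcal{W}},
\eee
and substituting this identity into the bound on $m$ produced by the collar, one extracts $\mh(\So)$ as a common factor; the quantities $\alpha_g$ and $\beta_g$ then emerge by optimizing over the admissible paths $\{g(t)\}$ and over the free ODE parameters in the construction. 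In the round case $g = g_\ast$ the path is trivial and the collar is degenerate, giving $\alpha_g = 0$, $\beta_g = 1$ as claimed.

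The main obstacle is arranging the collar so that the mass bound is genuinely multiplicative in $\mh(\So)$ rather than additive as in Theorem \ref{thm-main-1}. This forces a different scaling of the radial factor $r(t)$ than in the proof of Theorem \ref{thm-main-1}, together with a Gronwall-type comparison along the ODE governing $r$, controlling the accumulated defect from roundness by a constant multiple of $(1-\mathcal{W})^{-1}$. The admissibility threshold $\mathcal{W} < \beta_g/(1+\alpha_g)$ is exactly the condition needed to keep the denominator $\beta_g - (1+\alpha_g)\mathcal{W}$ inside the square root of \eqref{eq-main-2} positive, i.e.\ the condition under which the optimized mass estimate is finite.
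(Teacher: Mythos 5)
Your high-level skeleton matches the paper's: attach a Mantoulidis--Schoen type collar $(N,\gamma)$ to $\Omega$ along $\So$ with a round outer boundary, cap with a Schwarzschild end, apply the Riemannian Penrose inequality, and use the algebraic identity $\sqrt{|\So|/16\pi} = \mh(\So)/(1-\mathcal{W})$ to reorganize the resulting mass bound so that it factors through $\mh(\So)$. That identity is indeed the reason \eqref{eq-main-2} is multiplicative in $\mh(\So)$ rather than additive. But the mechanism you propose for making the bound multiplicative is not what actually works, and several load-bearing ingredients are missing or misidentified.

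First, path-connectedness of the space of positive-Gauss-curvature metrics is not sufficient: the collar's scalar curvature computation (equation \eqref{eq-R-gamma-3} in the paper) requires the path to satisfy the pointwise constraint $\tr_{g(t)}g'(t)=0$, which one obtains from an arbitrary area-preserving path by pulling back through a $t$-dependent diffeomorphism (Lemma \ref{lem-ht-2-gt}). Second, the paper does not solve a Riccati-type ODE system for $r(t),h(t)$ and does not use a Gronwall comparison to control the Hawking mass drift across the collar. The collar metric is written down explicitly as $\gamma = A^2\,dt^2 + r_o^{-2}u_m^2(Akt)\,g(t)$, with $u_m$ the Schwarzschild radial function, and Proposition \ref{prop-collar}(iii) gives the \emph{exact closed-form} identity $\mh(\Sigma_1) = \tfrac12[u_m(A_ok)-r_o](1-k^2)+\mh(\Sigma_0)$; the mass bound comes from the elementary Lipschitz estimate $u_m(s)-r_o\le s$ (when $m\ge0$), not from an integral inequality. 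Third, and most importantly, you never identify the specific parameter choice that forces the multiplicative structure. It is the choice $m=0$ in the collar (so that $u_0(s)=r_o+s$ and $k=\tfrac12 H_o r_o$) that yields $\tfrac12 A_o k(1-k^2)=\bigl[\alpha\mathcal{W}/(\beta-(1+\alpha)\mathcal{W})\bigr]^{1/2}\cdot\tfrac{r_o}{2}(1-\mathcal{W})=\bigl[\alpha\mathcal{W}/(\beta-(1+\alpha)\mathcal{W})\bigr]^{1/2}\mh(\So)$; without fixing $m=0$, the factor $\tfrac12 r_o(1-k^2)$ does not equal $\mh(\So)$ and the bound does not factor. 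Your ``uniquely determined mass $m>0$'' conflates the outer Schwarzschild cap's mass (which equals $\mh(\Sigma_1)$ and is whatever it turns out to be) with the parameter $m$ governing the collar's radial warping, which must be set to $0$ for this theorem. Finally, $\alpha_g,\beta_g$ do not ``emerge by optimizing over paths and ODE parameters''; they are simply the $\alpha$ and $\beta$ from \eqref{eq-df-beta}--\eqref{eq-df-alpha} for any single admissible path, and the theorem is a direct corollary of inequality \eqref{eq-m-equal-0} in Theorem \ref{thm-section-m-p}.
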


\begin{remark}
Similar to $ \eta(g)$, the constants 
$ \alpha_g$ and $ \beta_g$ also measure how far $g$ is different from a round metric.
By the proof of Proposition \ref{prop-est-eta} in Section \ref{sec-eta},  
one can take $\alpha_g \to 0$ and $\beta_g \to 1$ as $g$ approaches a round metric. 
As a result, suppose $\Omega$ is normalized so that $ | \So | = 4 \pi $ and the mean curvature
constant $H_o$ satisfies $H_o < 2$, then the condition $ \mathcal{W} < \frac{\beta_g}{1 + \alpha_g} $ is always 
met if $g$ is sufficiently round. 
\end{remark}

Now we outline the idea of the proof of Theorems  \ref{thm-main-1} -- \ref{thm-main-2}.
When the intrinsic metric $g$ on $ \So$ is round, Theorems  \ref{thm-main-1} and \ref{thm-main-2}
follow from \cite{Miao09} and Theorem \ref{thm-main-1-1} follows from \cite{Miao02, ShiTam02}. 
Thus, the major  case to prove   is  when  $g$ is not a round metric.
In this case, our proof is inspired by the work of Mantoulidis-Schoen \cite{M-S}. 
Suppose $(\So , g)$ is not isometric to a round sphere, 
  we  want to  construct a collar extension $(N, \gamma) $ of $ \Omega$, where $N = [0,1] \times \So$ and $\gamma$ is a
  suitably chosen  metric,  such that
  \begin{itemize}
  \item[a)] $\gamma$ has nonnegative scalar curvature;
  \item[b)]  the induced metric from $\gamma$ on $ \Sigma_0 :=  \{ 0 \} \times \So $ agrees with $ g$, and the mean curvature of
 $ \Sigma_0 $ in $(N, \gamma)$ equals the mean curvature $H_o$ of $ \So $ in $\Omega$; and
  \item[c)]    the induced metric from $\gamma$ on $ \Sigma_1: = \{ 1 \} \times \So $ is a round metric, and
   the Hawking mass of $\Sigma_1 $ in $(N, \gamma)$ is suitably controlled by the pair $(g, H_o)$.
  \end{itemize}
We then attach $(N, \gamma)$ to $ \Omega$ (see figure \ref{fig:MS})
to obtain a manifold $\hat \Omega$
whose (outer) boundary $ \Sigma_1$ is a round sphere with constant mean curvature.
Though $ \hat  \Omega$ may not be smooth across $ \So$, conditions a) and b) above ensure that
 the result in \cite{Miao09}, which itself was proved using the Riemannian Penrose inequality \cite{Bray01, H-I01}
can be applied to $\hat  \Omega$ to obtain
\be \label{eq-L-RPI}
 \mh (\Sigma_1) \ge \sqrt{ \frac{ | \Sh |}{16 \pi} } . 
\ee
(If $ \Sh = \emptyset $, we apply the positive mass theorem \cite{Schoen-Yau79, Witten81} instead to
have $ \mh (\Sigma_1 ) \ge 0 $.)
This,   combined with c),  then implies  the inequalities in Theorems   \ref{thm-main-1} -- \ref{thm-main-2}.

 %---------------
%! Figure

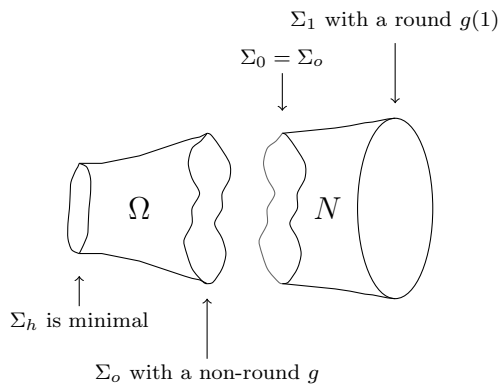
\begin{figure}[ht!] \label{fig:MS}
\begin{tikzpicture}
  \begin{scope}[shift={(-4.2,0)}] % the minimal surface part
      \draw [smooth, black!100] plot coordinates {(0, 0.6) (0.1, 0.55)  (0.15, 0.4) (0.15, 0.2) (0.13, 0)  
    (0.12, -0.2) (0.11, -0.35) (0.05, -0.5)    (0, -0.6)};
     \draw [smooth, black!100] plot coordinates {(0, -0.6) (-0.1, -0.55)  (-0.15, -0.4) (-0.15, -0.2) 
     (-0.13, 0)  
    (-0.12, 0.2) (-0.11, 0.35) (-0.05, 0.5)  (-0.02, 0.55)   (0, 0.6)};
     \node at (0, - 1.5) {\tiny $\Sigma_h$ is minimal}; 
    \draw [black!100, ->] plot coordinates {(0, -1.3) (0, - 0.9)};    
  \end{scope}
  \begin{scope} % \Omega
      \draw [black!100] plot coordinates {(-4.2, 0.6)  (- 3.9, 0.6)   (-3.4, 0.7)  (-2.8, 0.9)  
     (-2.7, 0.93)  (-2.6, 0.95)  (-2.5, 1) };
    \draw [black!100] plot coordinates {(-4.2, -0.6) (- 3.9, -0.6)   (-3.4, -0.7)  (-2.8, -0.9)  
     (-2.7, -0.93)  (-2.6, -0.95)  (-2.5, -1) };
   \begin{scope}[shift={(-2.5,0)}] % \Sigma_o
    \draw [smooth, black!100] plot coordinates {(0, 1) (0.05, 0.98) (0.15, 0.85) (0.3, 0.6) (0.3, 0.4) (0.2, 0.2) (0.2, 0.0) (0.1, -0.2) (0.2, -0.4) (0.25, -0.6) (0.2, -0.8) (0.08, -0.98) (0, -1)};   
    \node at (- 0.9 , 0 ) {$\Omega$};
    \draw [smooth, black!100] plot coordinates {(0, -1) (-0.05, -0.98) (-0.2, -0.8) (-0.3, -0.5) (-0.3, -0.4) (-0.2, -0.2) (-0.2, 0.0) (-0.1, 0.2) (-0.2, 0.4) (-0.25, 0.6) (-0.2, +0.8) (-0.08, +0.9) (0, 1)};
            \node at (0, - 2.2) {\tiny $\Sigma_o$ with \linebreak  a non-round $g$}; 
    \draw [black!100, ->] plot coordinates {(0, -2 ) (0, -1.2)}; 
  \end{scope}
  \end{scope}

  \begin{scope}[shift={(-1.5,0)}] %\Sigma_0
    \draw [smooth, black!100] plot coordinates {(0, 1) (0.05, 0.98) (0.15, 0.85) (0.3, 0.6) (0.3, 0.4) (0.2, 0.2) (0.2, 0.0) (0.1, -0.2) (0.2, -0.4) (0.25, -0.6) (0.2, -0.8) (0.08, -0.98) (0, -1)};
    \draw [smooth, black!60] plot coordinates {(0, -1) (-0.05, -0.98) (-0.2, -0.8) (-0.3, -0.5) (-0.3, -0.4) (-0.2, -0.2) (-0.2, 0.0) (-0.1, 0.2) (-0.2, 0.4) (-0.25, 0.6) (-0.2, +0.8) (-0.08, +0.9) (0, 1)};
        \node at (0, 2) {\tiny $\Sigma_0  = \Sigma_o $ }; 
    \draw [black!100, ->] plot coordinates {(0, 1.8) (0, 1.3)};
  \end{scope}

  \begin{scope} 
    \draw [black!100] plot coordinates {(-1.5, 1)  (-1.4, 1.01)  (-1.3, 1.02)  (-1.2, 1.03)  (-1, 1.05) 
    (-0.8, 1.07) (-0.6, 1.1)  (-0.5, 1.12)   (-0.4, 1.14)  (-0.3, 1.15)  (-0.2, 1.17)  (-0.1, 1.19) (0, 1.2)    };
\node at (- 0.9 , 0 ) {$N$};
    \draw [black!100] plot coordinates {(-1.5, -1)  (-1.4, -1.01)  (-1.3, -1.02)  (-1.2, -1.03)  (-1, -1.05) 
    (-0.8, -1.07) (-0.6, -1.1)  (-0.5, -1.12)   (-0.4, -1.14)  (-0.3, -1.15)  (-0.2, -1.17)  (-0.1, -1.19) 
    (0, -1.2)    };
    \draw [black!100](0, 0) ellipse (0.5 and 1.2); %\Sigma_1
    \node at (0, 2.5) {\tiny $\Sigma_1$ with a round $g(1)$}; 
    \draw [black!100, ->] plot coordinates {(0, 2.2) (0, 1.4)}; 
  \end{scope}
        
\end{tikzpicture}
\caption{A neck $N$ is attached to $\Omega$.}
\end{figure}

%----------------

In  the construction of $(N, \gamma)$, conditions on  $ \mathcal{W}$  are imposed so that
$\gamma$  has nonnegative scalar curvature and the introduction of  $\eta(g)$, $\alpha_g$ and $\beta_g$ 
makes  use of   results  from  \cite{M-S}.

\begin{remark}
It is worth mentioning  that the method described above  indeed reveals information of  the boundary 
component $\So$ in  the non-CMC case as well. Without assuming that $ \So$ is a CMC surface, 
Theorems \ref{thm-main-1} -- \ref{thm-main-2}  remain true if one  let $H_o  = \min_{\So} H $
in  the expressions of $\mathcal{W}$ and   $\mh (\So)$. 
With such a choice of $H_o$, the mean curvature of $\So$ in $\Omega$, which is $H$, dominates 
the  mean curvature of $\Sigma_0$ in $(N, \gamma)$ which is the constant $H_o$ (cf. figure \ref{fig:MS} above). 
Therefore, by employing the techniques in \cite{Miao02}, one knows \eqref{eq-L-RPI} (or $\mh(\Sigma_1) \ge 0 $) 
still holds on  $\hat \Omega$.
\end{remark}

This paper is organized as follows. In Section \ref{sec-collar},  we construct a suitable collar extension of $ \So $.
In Section \ref{sec-app}, we combine the collar extension and the Riemannian Penrose inequality
(or the Riemannian positive mass theorem)
to  draw conclusions  on  $\p \Omega$.
In Section \ref{sec-eta}, we give  the definition and  estimate of $\eta (g)$
and prove Theorems \ref{thm-main-1} -- \ref{thm-main-2}. 
A comparison between inequalities \eqref{eq-main-1} and \eqref{eq-main-2}  is included  in  
an appendix. 

\vspace{.3cm}

\noindent {\em Acknowledgements.}  
The work of PM was partially supported by Simons Foundation Collaboration Grant for Mathematicians \#281105.
 The work of XN  was partially supported by the National Science Foundation of China \#11671089, \#11421061.

\section{Collar extensions} \label{sec-collar}
In this section, we let $ \{ g(t) \}_{t \in [0,1]}$ be a fixed, smooth  path of metrics on $ \Sigma = S^2$, satisfying
\be \label{eq-gauss-K}
 K (g(t)) > 0  ,
 \ee
where $ K ( \cdot ) $ denotes the Gauss curvature of  a metric, and
\be \label{eq-trace-f}
 \tr_{g (t)} g'(t) = 0
 \ee
for all $ t \in [0,1]$, where  $ \tr_{ g(t) } (\cdot) $ is taking  trace  on  $(\Sigma, g(t))$.
Let $ | \Sigma |_{ g(t) }$ be the area of $(\Sigma, g(t))$ which is  a constant  by \eqref{eq-trace-f}.
Let $ r_o > 0 $ be the corresponding constant given by
\be \label{eq-df-ro}
 | \Sigma |_{g(t) } = 4 \pi r_o^2 .
 \ee

We will be interested in a metric $\gamma$ on $ N =  [0,1] \times \Sigma$ of the  form
\bee
\gamma =  A^2 d t^2 + E(t) g(t),
\eee
where $ A > 0 $ is a constant and $ E (t) > 0 $ is a function.
To make a suitable choice of $E(t)$, we consider
part of a spatial Schwarzschild metric
\be \label{eq-S-metric}
\gamma_m = \frac{1}{1 - \frac{2 m}{r} } d r^2 + r^2 g_*
\ee
of mass
$ m \le \frac12 r_o$
defined on $[r_o , \infty) \times S^2$.
Here $g_*$ denotes the standard metric on $S^2$ of area $ 4 \pi$.
We  emphasize that we do allow $m $ to be negative in \eqref{eq-S-metric}.

Making a change of variable
$$  s = \int_{r_o}^r \left( 1 - \frac{2m}{r} \right)^{-\frac12} d r,  $$
we rewrite $\gamma_m$ as
\bee
\gamma_m = d s^2 + u_m^2 (s) g_* ,
\eee
where $ s \in [0, \infty)$ and $ u_m (s) = r(s) $ which  satisfies
\be \label{eq-um-p}
u_m (0) = r_o, \
 \ u_m'(s) = \left(1 - \frac{2m}{ u_m (s) } \right)^\frac12, \
u_m''(s) = \frac{ m}{ u_m(s)^{2} }  .
\ee
Given any constants $ A > 0 $ and $ k \ge 0 $, we  define
\be \label{eq-Et-choice}
E (t) = r_o^{-2} u_m^2 ( A k t ) .
\ee
With such a choice of $ E(t)$, the mean curvature $H(t)$ of
$ \Sigma_t : = \{ t \} \times \Sigma$ with respect to $\gamma$ is
\be \label{eq-H-t}
\begin{split}
H (t) = & \ A^{-1} E^{-1} E' \\
 = & \ 2 k u_m^{-1} \left( 1 - \frac{2m}{u_m} \right)^\frac12
\end{split}
\ee
by \eqref{eq-trace-f} and \eqref{eq-um-p}.
The Hawking mass, $ \mh  (\Sigma_t)$, of $ \Sigma_t$ in $(N, \gamma)$ is
\be  \label{eq-Hmass-1}
\begin{split}
\mh  (\Sigma_t) = & \ \sqrt{ \frac{ | \Sigma_t |_{h (t)} }{ 16 \pi} } \left[ 1 - \frac{1}{16 \pi}
\int_{ \Sigma_t  } H(t)^2 d \sigma_{h (t)} \right] \\
= & \ \frac12 u_m (A k t )  ( 1 -   k^2 ) + m k^2 ,
\end{split}
\ee
where $ h(t) : = E(t) g(t)$ and  $d \sigma_{h(t)}$ is the area element on $(\Sigma_t, h(t))$.

Next we consider the scalar curvature of $\gamma$, denoted by $ R(\gamma)$.
Direct calculation gives
\bee
\begin{split}
R(\gamma)&= 2 K (h)  +A^{-2}\left[-\tr_h h''-\frac{1}{4}(\tr_h h')^2+\frac{3}{4}|{h}'|^2_h\right] ,
\end{split}
\eee
where, by \eqref{eq-trace-f},
\bee
\tr_h h' = 2 E^{-1}E' ,
\eee
\bee
| h' |_h^2  = E^{-2} \left[2 (E')^2 + E^2  |g'|_g^2 \right] ,
\eee
\bee
\tr_h h'' =  2 E^{-1} E'' + \tr_g g''   ,
\eee
and
\bee
0 = [ (\tr_g g') ]'   =   \tr_g g'' - | g'|_g^2  .
\eee
Hence,
\be \label{eq-R-gamma-2}
\begin{split}
R(\gamma)
= & \ E^{-1} 2 K ( g)   +A^{-2}\left[- \frac14 | g' |_g^2   - 2 E^{-1} E''  + \frac{1}{2 }  E^{-2}   (E')^2   \right]  . \\
\end{split}
\ee
Plugging in $ E(t) = r_o^{-2} u_m^2 ( Ak t  )$ and using \eqref{eq-um-p}, we have
\be \label{eq-shwarz}
\begin{split}
& \ A^{-2}\left[   - 2 E^{-1} E''  + \frac{1}{2 }  E^{-2}   (E')^2   \right]  \\
= & \  k^2 \left[   - 2 u_m^{-2}   (u_m')^2  -  4 u_m^{-1}     u_m''  \right]  \\
= & \  k^2 \left[  -  2  u_m^{-2}   \left( 1 - \frac{2 m}{u_m } \right)  - 4 u_m^{-3} m \right] \\
= & \ - k^2  2 u_m^{-2} .
\end{split}
\ee
Therefore, it follows from  \eqref{eq-R-gamma-2} and \eqref{eq-shwarz} that
\be \label{eq-R-gamma-3}
\begin{split}
R(\gamma) = & \  r_o^2 u_m^{-2}  2 K ( g )   - k^2 2 u_m^{-2} - \frac14  A^{-2} | g'|_g^2 \\
= & \ 2 u_m^{-2}  \left[  r_o^{2} K (g) - k^2  - u_m^2 A^{-2}  \frac18 | g'|_g^2  \right]   .
\end{split}
\ee

Now we  define  two quantities  associated to the path $\{ g(t) \}_{t \in [0,1]}$:
\be \label{eq-df-beta}
\beta : = \min_{ t \in [0,1], x \in \Sigma} r_o^2 K ( g(t) ) (x) 
\ee
and
\be \label{eq-df-alpha}
\alpha :  = \max_{ t \in [0,1], x \in \Sigma} \frac{1}{4} | g'|^2_g ( t, x) .
\ee
Clearly, $ \alpha = 0 $ if and only if $\{g(t)\}_{ t \in [0,1] } $
is a constant path.
Moreover,  by the Gauss-Bonnet theorem and \eqref{eq-df-ro},
\be \label{eq-GB-beta-0}
\int_\Sigma r_o^2 K (g(t) ) d \sigma_{g(t)} =  4\pi r_o^2
= \int_\Sigma 1 d \sigma_{g(t)} , \ \forall \ t .
\ee
Therefore,
\be \label{eq-GB-beta}
 \beta \le 1,  \ \mathrm{and}  \
\beta = 1 \Longleftrightarrow
\  r_o^2 K (g(t) ) (x)  = 1, \ \forall \ t, x .
\ee

In terms of  $\beta $ and $\alpha$,
it follows from \eqref{eq-R-gamma-3}  that
\be \label{eq-R-gamma-4}
\begin{split}
R(\gamma) \ge  & \ 2 u_m^{-2}  \left[ \beta - k^2  - \frac12  u_m^2 A^{-2} \alpha   \right]   .
\end{split}
\ee
To further estimate $ R(\gamma)$,  we consider the cases of $ m < 0 $ and $ m \ge 0 $ separately.

\vh

\noindent {\bf Case 1: $m < 0$}.  In this case, \eqref{eq-um-p} and the fact $ u_m (s) \ge r_o $ imply
\be
u_m'(s) \le \left( 1 - \frac{2m}{r_o} \right)^\frac12 ,
\ee
and therefore
\be \label{eq-um-u-bd}
u_m (s) \le r_o +  \left( 1 - \frac{2m}{r_o} \right)^\frac12  s .
\ee
Hence, by \eqref{eq-R-gamma-4} and \eqref{eq-um-u-bd},
\be \label{eq-R-gamma-5}
\begin{split}
R(\gamma) \ge  & \ 2 u_m^{-2}  \left\{ \beta - k^2  - \frac12 \left[ \left( 1 - \frac{2m}{r_o} \right)^\frac12kt + r_o A^{-1} \right]^2 \alpha   \right\}   \\
\ge  & \ 2 u_m^{-2}  \left\{ \beta - k^2  -   \left[ \left( 1 - \frac{2m}{r_o} \right) k^2  + (r_o A^{-1})^2 \right] \alpha   \right\}   .
\end{split}
\ee

\vh

\noindent {\bf Case 2: $m \ge 0 $}. In this case, \eqref{eq-um-p}  implies
$ u_m'(s) \le 1 $ and
\be \label{eq-um-u-bd-p}
u_m (s)  \le   r_o + s .
\ee
Therefore,  by \eqref{eq-R-gamma-4} and  \eqref{eq-um-u-bd-p},
\be \label{eq-R-gamma-5-p}
\begin{split}
R(\gamma) \ge  & \ 2 u_m^{-2}  \left[ \beta - k^2  - \frac12 \left( kt + r_o A^{-1} \right)^2 \alpha   \right]   \\
\ge  & \ 2 u_m^{-2}  \left[ \beta - k^2  -  \left( k^2  + r_o^2  A^{-2 } \right)  \alpha   \right]   .
\end{split}
\ee

\vh

We are led to the following proposition.

\begin{prop} \label{prop-collar}
Given a smooth path of metrics $ \{ g(t) \}_{t \in [0 ,1] } $ on $\Sigma $ satisfying \eqref{eq-gauss-K} and \eqref{eq-trace-f},
let $ r_o $,  $ \beta$ and $ \alpha$ be the constants
defined by \eqref{eq-df-ro}, \eqref{eq-df-beta} and \eqref{eq-df-alpha}, respectively.
Suppose $\alpha > 0 $, i.e. $ \{ g(t) \}_{t \in [0 ,1] } $ is not a constant path.
Let  $ m \le \frac12 r_o $ and $ k \ge  0 $ be two  constants satisfying
\be \label{eq-m-k-cond}
\beta - \left[ 1 + \left( 1 - \frac{2m}{r_o} \right) \alpha \right] k^2 > 0 , \ \mathrm{if} \ m < 0
\ee
or
\be \label{eq-k-cond-p}
\beta - ( 1 + \alpha) k^2 >  0 , \ \mathrm{if} \ m \ge 0 .
\ee
Let  $ A_o  > 0$  be the constant given by
\be \label{eq-df-Ao}
A_o = r_o \left[  \frac{ \alpha} { \beta - \left[ 1 + \left( 1 - \frac{2m}{r_o} \right) \alpha \right] k^2 } \right]^\frac12
\ \mathrm{if} \ m < 0
\ee
or
\be \label{eq-df-Ao-p}
A_o = r_o \left[ \frac{ \alpha} { \beta - ( 1 + \alpha) k^2} \right]^\frac12 , \ \mathrm{if} \ m \ge 0 .
\ee
 Let $ u_m (s)$ be the function defined by \eqref{eq-um-p}.
 Then,  for any constant  $A \ge A_o $,   the metric
 \be
 \gamma = A^2 d t^2 + r_o^{-2} u_m^2 ( A k t  ) g (t)
 \ee
 on $ N =  [0,1] \times \Sigma$ satisfies
 \begin{enumerate}
 \item[(i)]  $ R(\gamma) \ge 0 $, where $ R(\gamma)$ is the scalar curvature of $ \gamma$;
 \item[(ii)] the induced metric  on $\Sigma_0 : = \{ 0 \} \times \Sigma$ is $g(0)$, and  the mean curvature  of $\Sigma_0$
 is  $ H(0) = 2 k r_o^{-1} ( 1 - \frac{2m}{r_o} )^\frac12  $; and
 \item[(iii)]   $ \Sigma_t : = \{ t \} \times \Sigma$  has positive constant mean curvature for each $t$ and its  Hawking mass is
 \begin{equation*}
 \begin{split}
  \mh  ( \Sigma_t)   = & \ \frac12 \left[ u_m (A k t) - r_o \right] ( 1 - k^2) + \mh (\Sigma_0 ) .
  \end{split}
  \end{equation*}
 \end{enumerate}
\end{prop}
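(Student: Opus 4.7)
The plan is to assemble the three assertions directly from the identities already established in this section; no new machinery is required, only a careful substitution of the hypotheses into the scalar curvature and Hawking mass formulas derived above.

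For (ii), I would first note that $u_m(0) = r_o$ by \eqref{eq-um-p}, so $E(0) = 1$ and the induced metric on $\Sigma_0$ is exactly $g(0)$. Specializing \eqref{eq-H-t} at $t = 0$ then produces the claimed value $H(0) = 2 k r_o^{-1}(1 - 2m/r_o)^{1/2}$. For (iii), equation \eqref{eq-H-t} already exhibits $H(t)$ as a quantity depending only on $t$, so each $\Sigma_t$ is a surface of constant mean curvature; moreover the Hawking mass identity \eqref{eq-Hmass-1} yields the desired telescoping form upon subtracting its value at $t = 0$, since the additive term $m k^2$ cancels.

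The substantive content is therefore (i). Using \eqref{eq-R-gamma-5} when $m < 0$, or \eqref{eq-R-gamma-5-p} when $m \ge 0$, the scalar curvature lower bound reduces the problem to a single algebraic inequality in the constant $A$. Solving that inequality under \eqref{eq-m-k-cond} (respectively \eqref{eq-k-cond-p}) yields precisely the threshold value $A_o$ displayed in \eqref{eq-df-Ao} (respectively \eqref{eq-df-Ao-p}); because the offending term in the scalar curvature estimate depends on $A$ only through $r_o^2 A^{-2}$, any $A \ge A_o$ also suffices to ensure $R(\gamma) \ge 0$.

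The main, and essentially the only, obstacle is clean bookkeeping across the two cases: the sign of $m$ governs which linear upper bound for $u_m(s)$ follows from \eqref{eq-um-p}, and thus whether the factor $(1 - 2m/r_o)$ appears both in the hypothesis on $k$ and in the definition of $A_o$. Beyond splitting according to the sign of $m$, the argument is a direct verification from the formulas already assembled above.
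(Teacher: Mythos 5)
Your proposal is correct and follows the same route as the paper's own (very terse) proof: (i) reduces to an algebraic inequality in $A$ via \eqref{eq-R-gamma-5} and \eqref{eq-R-gamma-5-p}, whose threshold is exactly $A_o$; (ii) and (iii) are read off from \eqref{eq-H-t} and \eqref{eq-Hmass-1} together with $u_m(0)=r_o$, with the additive $mk^2$ term cancelling in the Hawking mass difference as you observe.
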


\begin{proof}
(i) is a direct corollary of \eqref{eq-R-gamma-5} and  \eqref{eq-R-gamma-5-p}.
(ii) follows from  \eqref{eq-H-t} and  the fact $ u_m (0) = r_o $.
 (iii) is implied by    \eqref{eq-H-t} and \eqref{eq-Hmass-1}.
\end{proof}

\begin{remark}  \label{rem-p-R}
In Proposition \ref{prop-collar},  one indeed  has  $ R(\gamma) > 0 $ on $ [0, 1) \times \Sigma $.
This is because in both \eqref{eq-R-gamma-5} and \eqref{eq-R-gamma-5-p},
 the second inequality is a strict inequality unless $t = 1$.
 Now suppose $ g(1) $ is a round metric and $g(0)$ is not round, then
 $ r_o^2 K (g(1)) = 1 $ and $ \beta < 1 $ by  \eqref{eq-GB-beta}.
Thus, by \eqref{eq-R-gamma-3}, the inequality in \eqref{eq-R-gamma-4} is strict at $t=1$.
Therefore,  in this case,  $ R(\gamma) > 0 $ everywhere on $N$.
\end{remark}

\begin{remark}
When $\alpha = 0 $, by \eqref{eq-R-gamma-4},
 it suffices to require $\beta \ge k^2$
 for $\gamma$ to have $ R(\gamma) \ge 0$.
 In particular, if $\{ g(t)\}_{t \in [0,1]} $ consists of a fixed round metric
 and  $ k^2 = \beta = 1$,
 then $\gamma$ reduces   to the Schwarzschild metric $\gamma_m$.
\end{remark}

\section{Application}  \label{sec-app}

In this section, we let
$ \Omega $ be a  compact Riemannian $3$-manifold
with  the following properties:
\begin{itemize}
\item $\Omega$ has nonnegative scalar curvature;

\item  $\p \Omega$ is the disjoint union of  $\So $ and $ \Sh$, where
 $ \So$ is  a topological  $2$-sphere and $ \Sh$, if nonempty,  is the unique, 
 closed minimal surface  (possibly disconnected)  in $ \Omega$;

 \item the mean curvature of $ \So$ in $\Omega$ is a positive constant $H_o$; and

 \item
  there exists   a smooth path of metrics $ \{ g(t) \}_{t \in [0,1]}$ on $ \Sigma : = \So  $
satisfying \eqref{eq-gauss-K} and \eqref{eq-trace-f}
such that $ g(0) = g $, which is  the induced metric on $ \Sigma $ from $\Omega$, and $ g(1) $ is a round metric.
 \end{itemize}

We will apply a suitable collar extension constructed in Proposition \ref{prop-collar}
and the Riemannian Penrose inequality (or the positive mass theorem) to draw 
information on the geometry of $ \So$.

First, we consider a result
obtained by applying Proposition \ref{prop-collar}
 with parameters $m < 0$.
In this case, we  impose a condition
\be \label{eq-H-condition-m-n}
\left( \frac14 H_o^2 r_o^2  \right) \alpha
< \beta 
\ee
on $ \So$, where $ r_o$ is the area radius of $(\So, g)$ and  $ \beta$, $ \alpha$ are the constants, 
associated to the path $\{ g(t) \}_{t \in [0,1]}$, defined in \eqref{eq-df-beta}, \eqref{eq-df-alpha}, respectively.

\begin{thm} \label{thm-section-m-n}
If  \eqref{eq-H-condition-m-n} holds, then
\be \label{eq--section-m-n}
\frac12 r_o
\left[     \frac{  \frac14 H_o^2  r_o^2  \alpha  } {   \beta-  \frac14 H_o^2  r_o^2 \alpha  }   \right]^\frac12
+ \mh (\So) \ge \sqrt{ \frac{ | \Sh |}{16 \pi} } .
\ee
\end{thm}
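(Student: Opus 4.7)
The plan is to apply Proposition~\ref{prop-collar} with a parameter $m<0$, glue the resulting collar $(N,\gamma)$ to $\Omega$ along $\So\equiv \Sigma_0$, and then invoke the Riemannian Penrose inequality in the form \eqref{eq-L-RPI} from \cite{Miao09} on the resulting manifold $\hat\Omega$. The case $\alpha=0$ forces $g$ to be round and reduces directly to \cite{Miao09}, so I assume $\alpha>0$. For each $m<0$, set $k=\tfrac{H_o r_o}{2(1-2m/r_o)^{1/2}}$, which by Proposition~\ref{prop-collar}(ii) makes the mean curvature $H(0)$ of $\Sigma_0$ in the collar equal to $H_o$. With this $k$,
\begin{equation*}
\beta - \left[1 + \left(1-\tfrac{2m}{r_o}\right)\alpha\right]k^2 \;=\; \beta - \tfrac14 H_o^2 r_o^2 \alpha - \frac{H_o^2 r_o^2}{4(1-2m/r_o)}.
\end{equation*}
Hypothesis \eqref{eq-H-condition-m-n} ensures $\beta - \tfrac14 H_o^2 r_o^2 \alpha>0$, while the last term vanishes as $m\to-\infty$, so condition \eqref{eq-m-k-cond} is met for all $|m|$ sufficiently large. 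Fix such an $m$, take $A=A_o$ from \eqref{eq-df-Ao}, and let $(N,\gamma)$ be the collar produced by Proposition~\ref{prop-collar}.

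Next I would form $\hat\Omega = \Omega \cup_{\So} N$. By Proposition~\ref{prop-collar}(ii) the inner boundary data of $N$ match $(g,H_o)$ on $\So$ exactly, so $\hat\Omega$ is a Lipschitz manifold with nonnegative scalar curvature across the seam. Its outer boundary $\Sigma_1$ is a round $2$-sphere with positive constant mean curvature, by Proposition~\ref{prop-collar}(iii) combined with the roundness of $g(1)$. Applying \cite{Miao09} to $\hat\Omega$ (or the positive mass theorem when $\Sh=\emptyset$) yields $\mh(\Sigma_1)\ge\sqrt{|\Sh|/(16\pi)}$.

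Finally I would combine Proposition~\ref{prop-collar}(iii), which gives $\mh(\Sigma_1)=\tfrac12[u_m(A_ok)-r_o](1-k^2)+\mh(\So)$, with the linear bound $u_m(A_ok)-r_o\le (1-2m/r_o)^{1/2}A_o k$ from \eqref{eq-um-u-bd}. Substituting the definitions of $k$ and $A_o$ cancels the factor $(1-2m/r_o)$ cleanly, leaving
\begin{equation*}
\mh(\Sigma_1)-\mh(\So) \;\le\; \frac{r_o(1-k^2)}{2}\left[\frac{\tfrac14 H_o^2 r_o^2 \alpha}{\beta-k^2-\tfrac14 H_o^2 r_o^2 \alpha}\right]^{1/2}.
\end{equation*}
Since $k^2=\tfrac{H_o^2 r_o^2}{4(1-2m/r_o)}\to 0$ as $m\to-\infty$, the right-hand side converges to $\tfrac{r_o}{2}\left[\tfrac{\tfrac14 H_o^2 r_o^2 \alpha}{\beta-\tfrac14 H_o^2 r_o^2 \alpha}\right]^{1/2}$. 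Passing to the limit in $\sqrt{|\Sh|/(16\pi)}-\mh(\So)\le \mh(\Sigma_1)-\mh(\So)$ yields \eqref{eq--section-m-n}.

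The hard part is conceptual rather than computational: one must verify that \cite{Miao09} carries through on the only-Lipschitz manifold $\hat\Omega$, which is precisely where the matched mean curvatures in Proposition~\ref{prop-collar}(ii) do their essential work (the matching being the corner condition that preserves distributional nonnegativity of the scalar curvature). The rest is routine, but the clean cancellation of $(1-2m/r_o)$ under the bound \eqref{eq-um-u-bd}, and the consequent ability to drive $k\to 0$ by sending $m\to-\infty$, is the key algebraic observation that makes the choice $m<0$ (rather than $m\ge 0$, which would have been the more natural Schwarzschild regime) the right one for this estimate.
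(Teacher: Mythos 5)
Your proposal is correct and follows essentially the same route as the paper's own proof: choose $k$ so that the collar mean curvature matches $H_o$, verify \eqref{eq-m-k-cond} via \eqref{eq-H-condition-m-n}, glue the collar to $\Omega$, invoke the localized Penrose inequality from \cite{Miao09} (handled via the corner-smoothing or the alternative direct gluing to a Schwarzschild end, as the paper explains), bound $u_m(A_ok)-r_o$ by \eqref{eq-um-u-bd}, and let $m\to-\infty$ so $k\to 0$. Your algebraic simplification of $\beta-[1+(1-2m/r_o)\alpha]k^2$ to $\beta-k^2-\tfrac14H_o^2r_o^2\alpha$ and the resulting expression for $\tfrac14H_or_oA_o$ agree with the paper's \eqref{eq-g-result-m-n}.
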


\begin{proof}
If $\alpha = 0 $, then $g$ is a round metric. In this case,
 the  claim reduces to $  \mh (\So) \ge \sqrt{ \frac{ | \Sh |}{16 \pi} } $,
 which follows from \cite[Theorem 1]{Miao09}.
 Therefore, it suffices to consider the case $g$ is not round, i.e. $ \alpha > 0 $.

We will construct a suitable metric $\gamma$ on  $N = \Sigma \times [0, 1]$ and attach $(N, \gamma)$ to
$\Omega$ along $  \So $.
To do so,
note that \eqref{eq-H-condition-m-n} implies
there are constants   $ m < 0 $
satisfying
\be \label{eq-choice-of-m-n}
\beta -  \frac14 H_o^2  r_o^2 \alpha   -  \frac14 H_o^2  r_o^2  \left( 1 - \frac{2m }{r_o}   \right)^{-1} > 0 .
\ee
For any such an $m$, define
\be  \label{eq-df-k-app}
k =  \frac12 H_o  r_o \left(1 - \frac{2m }{r_o}  \right)^{-\frac12} .
\ee
Then \eqref{eq-choice-of-m-n} gives
\be  \label{eq-cond-k-m-n}
\beta - \left[ 1 + \left( 1 - \frac{2m}{r_o} \right) \alpha \right] k^2 > 0 .
\ee
Now let
\be \label{eq-df-A-o-app}
A_o = r_o \left[  \frac{ \alpha} { \beta - \left[ 1 + \left( 1 - \frac{2m}{r_o} \right) \alpha \right] k^2 } \right]^\frac12
\ee
and consider the metric
\be
\gamma = A_o^2 dt^2 + r_o^{-2} u_m^2 ( A_o k t) g(t)
\ee
on $N$.
Let $ \Sigma_t : = \{ t \} \times \Sigma$.
It  follows from \eqref{eq-cond-k-m-n}, \eqref{eq-df-A-o-app} and Proposition \ref{prop-collar} that
$(N, \gamma)$ has nonnegative scalar curvature,
each $ \Sigma_t$ has positive constant mean curvature,
the induced metric from $\gamma$ on $  \Sigma_0 $
agrees with $g$,  the mean curvature  $H(0)$  of $\Sigma_0$
equals $H_o$, and the Hawking mass of  $\Sigma_1$
in $(N, \gamma)$ and the Hawing mass  of $\So$ in $\Omega$ are related by
\be \label{eq-H-mass-o-1}
\begin{split}
 \mh  (\Sigma_1) =  & \ \frac12  \left[ u_m (A_o k )  - r_o \right] ( 1 - k^2) + \mh  (\Sigma_0)  .
 \end{split}
 \ee

Now we glue $(N, \gamma)$ and $ \Omega$ along their common boundary component $ \Sigma_0 = \So $
to obtain a Riemannian manifold $\hat {\Omega}$.
The metric $\hat g$ on $ \hOmega$  is Lipschitz across $\So$ and smooth everywhere else;
it has nonnegative scalar curvature away from $\So$; and the mean curvature of $ \So$
from both sides in $\hOmega$ agree.
Moreover,  $ \p \hOmega = \Sh \cup \Sigma_1$ where $ \Sigma_1$  is isometric to a round sphere
and has constant mean curvature.
Therefore, applying the mollification method used in \cite{Miao02, Miao09} which
smooths  out the corner of $\hat g $ at $\So$,
we  know that  \cite[Theorem 1]{Miao09} applies to   $\hat \Omega$ to give
\be \label{eq-H-mass-h-1}
\mh  (\Sigma_1) \ge \sqrt{ \frac{ | \Sh |}{16 \pi} } .
\ee
(A more precise and direct way to derive  \eqref{eq-H-mass-h-1} is as follows.
Since $ \Sigma_1 $ is both round and having constant mean curvature,
we can again attach to $ \hat \Omega$, along $ \Sigma_1$,
a manifold
$ N_\infty = \left( [ r_1, \infty) \times S^2 , \gamma_m \right) $  with
 $ 4 \pi r_1^2  =  | \Sigma_1 |$, $\gamma_m$ given by \eqref{eq-S-metric} and $ m =  \mh  (\Sigma_1)$.
 Indeed,  $N_\infty$ is
the region that is  exterior to a rotationally symmetric sphere with area $ | \Sigma_1 |$
in the spatial Schwarzschild manifold whose mass is $  \mh  (\Sigma_1)$.
We denote the resulting manifold by $\hat M$, which consists of three pieces $\Omega$, $N$ and $N_\infty$.
The metric on $\hat M$ satisfies  the mean curvature matching condition across  both $\So$ and $ \Sigma_1 $.
Therefore, one can repeat the same proof in \cite{Miao09}, starting from Lemma 3 on page 278 and ending at equation (47)
on page 280,  to conclude that the Riemannian Penrose inequality still holds on such an $\hat M$,
which proves   \eqref{eq-H-mass-h-1}.)

To proceed, we note that \eqref{eq-H-mass-o-1} and \eqref{eq-H-mass-h-1} imply
\be \label{eq-H-mass-h-1-more}
\frac12  \left[ u_m (A_o k )  - r_o \right] ( 1 - k^2) + \mh  (\So)  \ge \sqrt{ \frac{ | \Sh |}{16 \pi} } .
 \ee
By \eqref{eq-cond-k-m-n} and  \eqref{eq-GB-beta},
 \be \label{eq-k-l-1}
 k^2 < \beta \leq 1 ,
 \ee
and,  by \eqref{eq-um-u-bd},
\be \label{eq-upp-u-m-r-n}
\begin{split}
u_m (A_o k ) - r_o \le & \ \left( 1 - \frac{2m}{r_o} \right)^\frac12  A_o k \\
= & \  \frac12 H_o r_o  A_o .
\end{split}
\ee
Therefore,  \eqref{eq-H-mass-h-1-more} -- \eqref{eq-upp-u-m-r-n} imply
\be \label{eq-H-mass-h-1-more-1}
 \frac14 H_o r_o  A_o    ( 1 - k^2) + \mh  (\So)  \ge \sqrt{ \frac{ | \Sh |}{16 \pi} } ,
 \ee
 where
 \be
 \begin{split}
  \frac14 H_o r_o  A_o        
  = & \  \frac12   r_o \left[  \frac{  \frac14 H_o^2 r_o^2    \alpha} {  \left( \beta -   \frac14 H_o^2 r_o^2   \alpha \right)  - k^2  } \right]^\frac12  .
 \end{split}
 \ee
In summary, we have proved
\be \label{eq-g-result-m-n}
\frac12   r_o \left[  \frac{  \frac14 H_o^2 r_o^2    \alpha} {  \left( \beta -   \frac14 H_o^2 r_o^2   \alpha \right)  - k^2  } \right]^\frac12
 (1 - k^2) +  \mh  (\So)  \ge \sqrt{ \frac{ | \Sh |}{16 \pi} }
\ee
 for any $ m < 0 $ satisfying \eqref{eq-choice-of-m-n}.

 To obtain a  result that does not involve $m$ or $k$, we can
let $ m \to -\infty$ and   \eqref{eq-df-k-app} shows
\be \label{eq-limit-1}
 \lim_{ m \rightarrow - \infty} k = 0 .
\ee
It follows from \eqref{eq-g-result-m-n} and \eqref{eq-limit-1} that
\be \label{eq-conclusion-1}
\frac12 r_o
\left[    \frac{    \frac14 H_o^2  r_o^2 \alpha } {  \beta-   \frac14 H_o^2  r_o^2  \alpha   }  \right]^\frac12
+ \mh (\So) \ge \sqrt{ \frac{ | \Sh |}{16 \pi} } ,
\ee
which proves the theorem.
\end{proof}

\begin{remark} \label{rem-empty-sh}
If $ \Sh = \emptyset$, i.e  if  $ \Omega$ is merely a compact $3$-manifold with nonnegative
scalar curvature, with boundary $ \p \Omega =  \So $,
 then, replacing the Riemannian Penrose inequality by the Riemannian positive mass theorem
 in the proof,   one has $ \mh(\Sigma_1) \ge 0 $ (cf. \cite{Miao02, ShiTam02}). 
 %instead of   \eqref{eq-H-mass-h-1}.
 In this case, the result becomes
\be
\frac12 r_o
\left[    \frac{    \frac14 H_o^2  r_o^2 \alpha  } {   \beta-   \frac14 H_o^2  r_o^2  \alpha  }  \right]^\frac12
+ \mh (\So) \ge 0 .
\ee
\end{remark}

Next, we consider a corresponding result obtained by
applying Proposition \ref{prop-collar} with parameters $m \ge 0$.
In this case, we  assume  a condition
\be \label{eq-H-condition-m-p}
\frac14 H_o^2 r_o^2 < \frac{ \beta}{ 1 + \alpha }.
\ee

\begin{thm} \label{thm-section-m-p}
Suppose \eqref{eq-H-condition-m-p} holds. Given any constant $ m \in \left[0, \frac12 r_o \right) $
satisfying
\be \label{eq-eq-H-condition-m-p-p}
\frac14 H_o^2 r_o^2
< \frac{ \beta}{ 1 + \alpha }  \left( 1 - \frac{2m }{r_o}   \right),
\ee
define
\be  \label{eq-df-k-app-m-p}
k =  \frac12 H_o  r_o \left(1 - \frac{2m }{r_o}  \right)^{-\frac12} , \
A_o = r_o \left[  \frac{ \alpha} { \beta - \left( 1 +  \alpha \right) k^2 } \right]^\frac12  .
\ee
Then
\bee
\frac12  A_o k ( 1 - k^2)  + \mh (\So) \ge \sqrt{ \frac{ | \Sh |}{16 \pi} } .
\eee
In particular, if one chooses $m = 0 $, then
\be \label{eq-m-equal-0}
  \left[  \frac{ \alpha \left(  \frac14 H_o^2  r_o^2 \right) } { \beta  - \left( 1 +  \alpha \right) \left(  \frac14 H_o^2  r_o^2 \right) } \right]^\frac12   \mh (\So)    + \mh (\So) \ge \sqrt{ \frac{ | \Sh |}{16 \pi} } ,
\ee
and consequently 
\be \label{eq-m-equal-0-r}
  \left[  \frac{  \frac14 H_o^2  r_o^2  } { \frac{\beta}{\left( 1 +  \alpha \right) }   -  \frac14 H_o^2  r_o^2  } \right]^\frac12   \mh (\So)    + \mh (\So) \ge \sqrt{ \frac{ | \Sh |}{16 \pi} }  . 
\ee
\end{thm}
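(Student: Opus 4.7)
The plan is to mirror the proof of Theorem \ref{thm-section-m-n}, but to use Proposition \ref{prop-collar} in its $m \geq 0$ branch and to keep $m$ fixed rather than sending it to $-\infty$. The case $\alpha = 0$ (that is, $g$ already round) reduces immediately to \cite[Theorem 1]{Miao09}, so I may assume $\alpha > 0$.

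Given $m \in [0, \frac{1}{2}r_o)$ satisfying \eqref{eq-eq-H-condition-m-p-p}, one first checks that $k = \frac{1}{2} H_o r_o (1 - 2m/r_o)^{-1/2}$ satisfies $k^2 < \beta/(1+\alpha)$, which is exactly condition \eqref{eq-k-cond-p}, and that the prescribed $A_o$ matches \eqref{eq-df-Ao-p}. Proposition \ref{prop-collar} then produces a collar $(N, \gamma)$ with $N = [0,1] \times \Sigma$, where $\gamma = A_o^2 dt^2 + r_o^{-2} u_m^2(A_o k t) g(t)$, such that $R(\gamma) \geq 0$, $\Sigma_0$ is isometric to $(\So, g)$ with mean curvature $H(0) = 2 k r_o^{-1}(1 - 2m/r_o)^{1/2} = H_o$, and $\Sigma_1$ is a round sphere with constant mean curvature. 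Gluing $(N, \gamma)$ to $\Omega$ along $\So$ produces $\hOmega$ across whose corner the induced metric and mean curvature match, so, exactly as in the proof of Theorem \ref{thm-section-m-n} -- either by the mollification argument of \cite{Miao02, Miao09} or by further attaching an exterior Schwarzschild end $N_\infty$ and invoking the proof of \cite{Miao09} directly -- the Riemannian Penrose inequality yields $\mh(\Sigma_1) \geq \sqrt{|\Sh|/(16\pi)}$.

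Proposition \ref{prop-collar}(iii) gives the identity $\mh(\Sigma_1) = \frac{1}{2}\bigl[u_m(A_o k) - r_o\bigr](1 - k^2) + \mh(\So)$. The crucial simplification in the $m \geq 0$ regime is the bound \eqref{eq-um-u-bd-p}, $u_m(A_o k) - r_o \leq A_o k$, which replaces the more delicate estimate needed for $m < 0$; meanwhile $1 - k^2 > 0$ follows from $k^2 < \beta/(1+\alpha) \leq \beta \leq 1$ via \eqref{eq-GB-beta}. Combining these gives the first inequality of the theorem.

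For the specialisation $m = 0$, one has $k = \frac{1}{2} H_o r_o$, hence $k^2 = \frac{1}{4} H_o^2 r_o^2$, and a direct computation yields $\mh(\So) = \frac{1}{2} r_o(1 - k^2)$; this rewrites $\frac{1}{2} A_o k (1 - k^2)$ as $\bigl[\alpha k^2/(\beta - (1+\alpha) k^2)\bigr]^{1/2} \mh(\So)$, giving \eqref{eq-m-equal-0}. Inequality \eqref{eq-m-equal-0-r} then follows from $\alpha \leq 1 + \alpha$, which only enlarges the coefficient of $\mh(\So)$. There is no new analytic obstacle beyond what is already handled in Theorem \ref{thm-section-m-n}; the one bookkeeping step worth verifying carefully is the alignment of the prescribed $k$ and $A_o$ with the hypotheses of Proposition \ref{prop-collar}, together with the mean-curvature match at $\So$ that legitimises gluing.
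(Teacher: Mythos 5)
Your proof matches the paper's argument essentially step for step: reduce to $\alpha>0$, verify \eqref{eq-k-cond-p} from \eqref{eq-eq-H-condition-m-p-p} and \eqref{eq-df-k-app-m-p}, invoke Proposition~\ref{prop-collar} in the $m\ge 0$ branch, glue the collar and apply the Riemannian Penrose inequality, bound $u_m(A_o k)-r_o\le A_o k$ via \eqref{eq-um-u-bd-p}, and specialize to $m=0$ using the identity $\mh(\So)=\tfrac12 r_o(1-k^2)$. The passage from \eqref{eq-m-equal-0} to \eqref{eq-m-equal-0-r} implicitly requires $\mh(\So)\ge 0$, but this is forced by \eqref{eq-m-equal-0} itself, and the paper glosses over that point in the same way.
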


\begin{proof}
Again, it suffices to assume  $ \alpha > 0 $.
By \eqref{eq-eq-H-condition-m-p-p} and \eqref{eq-df-k-app-m-p},
\be \label{eq-b-k-m-app-m-p}
\begin{split}
  & \ \beta - \left( 1 +  \alpha \right) k^2 \\
= &  \  \beta - \left( 1 + \alpha \right) \frac14 H_o^2  r_o^2  \left( 1 - \frac{2m }{r_o}   \right)^{-1} \\
> & \ 0  .
\end{split}
\ee
Consider the metric
$$
\gamma = A_o^2 dt^2 + r_o^{-2} u_m^2 ( A_o k t) g(t)
$$
on $N = [0, 1] \times \Sigma$.
Let $ \Sigma_t : = \{ t \} \times \Sigma$.
It  follows from \eqref{eq-df-k-app-m-p},  \eqref{eq-b-k-m-app-m-p} and Proposition \ref{prop-collar} that
$(N, \gamma)$ has nonnegative scalar curvature,
the induced metric from $\gamma$ on $  \Sigma_0 $
agrees with $g$,  the mean curvature  $H(0)$  of $\Sigma_0$
equals $H_o$, and the Hawking mass of  $\Sigma_1$
in $(N, \gamma)$ and the Hawing mass  of $\So$ in $\Omega$ are related by
\be \label{eq-H-mass-o-1-m-p}
\begin{split}
 \mh  (\Sigma_1)
 = & \   \frac12  \left[ u_m (A_o k )  - r_o \right] ( 1 - k^2) + \mh  (\So)  .
 \end{split}
 \ee
Attaching $(N, \gamma)$ to $ \Omega$,
we  have
\be \label{eq-H-o-mass-h-1-m-p}
\mh  (\Sigma_1) \ge \sqrt{ \frac{ | \Sh |}{16 \pi} }
\ee
by the reason explained in the proof of Theorem \ref{thm-section-m-n}.
It follows from \eqref{eq-H-mass-o-1-m-p}  and \eqref{eq-H-o-mass-h-1-m-p}  that
\be \label{eq-H-mass-h-1-more-m-p}
\frac12  \left[ u_m (A_o k )  - r_o \right] ( 1 - k^2) + \mh  (\So)  \ge \sqrt{ \frac{ | \Sh |}{16 \pi} } .
 \ee
 Again, since $ \beta \le 1 $, \eqref{eq-b-k-m-app-m-p} implies $ k^2 <   1 $.
 Also,   \eqref{eq-um-u-bd-p} shows
 $$ u_m (A_o k ) - r_o \le  A_o k .  $$
Therefore,  \eqref{eq-H-mass-h-1-more-m-p} implies
\be \label{eq-H-mass-h-1-more-1-m-p}
\frac12   A_o k  ( 1 - k^2) + \mh  (\So)  \ge \sqrt{ \frac{ | \Sh |}{16 \pi} } ,
 \ee
 where
 \be
 A_o k  = r_o \left[  \frac{ \alpha k^2 } { \beta  - \left( 1 +  \alpha \right) k^2 } \right]^\frac12  .
\ee
Thus, we have proved
\be \label{eq-g-result-m-p}
\frac12  r_o \left[  \frac{ \alpha k^2 } { \beta  - \left( 1 +  \alpha \right) k^2 } \right]^\frac12 ( 1 - k^2)
+ \mh  (\So)  \ge \sqrt{ \frac{ | \Sh |}{16 \pi} }
\ee
for any $ m \in [ 0, \frac12 r_o)  $ satisfying \eqref{eq-eq-H-condition-m-p-p}.

To obtain a  result that does not involve $m$ or $k$, we can  take  $ m = 0 $. In this case,
$ k = \frac12 H_o r_o$ and  \eqref{eq-g-result-m-p} becomes
 \be
 \begin{split}
 \left[  \frac{ \alpha \frac14 H_o^2 r_o^2 } { \beta - \left( 1 +  \alpha \right) \frac14 H_o^2 r_o^2  } \right]^\frac12
   \mh (\So)   + \mh  (\So)  \ge \sqrt{ \frac{ | \Sh |}{16 \pi} }   ,
 \end{split}
 \ee
 which proves \eqref{eq-m-equal-0}. Inequality \eqref{eq-m-equal-0-r} follows from  \eqref{eq-m-equal-0} simply 
 by the fact $\frac{\alpha}{1 + \alpha} \le 1 $. This completes the proof. 
 \end{proof}

\begin{remark} \label{rem-minimizing-m}
In the derivation of Theorems \ref{thm-section-m-n} and  \ref{thm-section-m-p},
besides taking $ m = - \infty$ and $ m =0$,
one can  minimize the first term in \eqref{eq-g-result-m-n} and \eqref{eq-g-result-m-p},
subject to the constraint
$m $ satisfies \eqref{eq-choice-of-m-n} and
\eqref{eq-eq-H-condition-m-p-p}, respectively.
We leave this calculation   in Appendix \ref{appen}.
\end{remark}

\begin{remark}
If $g$ is not a round metric, i.e. $\alpha > 0 $, the collar $(N, \gamma)$ that we attached to $\Omega$
indeed has strictly positive scalar curvature by Remark \ref{rem-p-R}. Therefore,
by the rigidity statement of the Riemannian Penrose inequality, one naturally would expect
that inequalities in \eqref{eq-H-mass-h-1-more} and \eqref{eq-H-mass-h-1-more-m-p}
are indeed  strict. Therefore, equalities in Theorems \ref{thm-section-m-n} and  \ref{thm-section-m-p}
should  hold only if $ \alpha = 0$, i.e. when $g$ is a round metric on $\So$.
However, we do not have a rigorous proof of this claim.
\end{remark}

\section{Definition of $\eta(g)$} \label{sec-eta}
In this section, we  define  the quantity $\eta (g)$  and prove Theorems \ref{thm-main-1} -- \ref{thm-main-2}.
Given a metric $g$ with positive Gauss curvature on $ \Sigma = S^2$,
 let $\{ h(t) \}_{t \in [0,1]}$ denote a smooth path of metrics on $ \Sigma$ such that
\begin{itemize}
\item[(i)] $h(0) $ is isometric to $ g $ and $h(1)$ is a round metric;
\item[(ii)]
 $h(t) $ has positive Gauss curvature, i.e. $K( h(t) ) > 0 $, $\forall \ t$; and
\item[(iii')]  $ | \Sigma |_{h(t)} = | \Sigma |_{g}$, i.e. the area of $ (\Sigma, h(t)) $  is a constant,  $\forall \ t$.
\end{itemize}
There are various ways to construct such a path.
For instance, one may apply  the uniformization theorem
to write $g = e^{2 w} g_o$ for some  function
$w$ and a  round metric $g_o$, and
to define $h(t) = e^{ 2 (1- t) w} g_o$ (cf. \cite{Nirenberg}),  followed by an area normalization.

Given such a path $\{ h(t) \}_{t \in [0,1]}$,
 applying the  proof of Lemma 1.2 in \cite{M-S} to $\{ h(t) \}_{ t \in [0,1]}$,
one  can  construct   a new  path of metrics $\{ g(t) \}_{ t \in [0,1]}$,
satisfying  (i)  and (ii), with $h(t)$ replaced by $g(t)$, together with  the following property  that is stronger than (iii'):
\begin{itemize}
\item[(iii)] $ \frac{d}{dt} d \sigma_{g(t)} = 0 $,  or equivalently $ \tr_{g(t)} g'(t) = 0 $,  $\forall \ t $.
Here  $ d \sigma_{g(t)}$ is the area form of $g(t)$.
\end{itemize}
We include this construction of  $\{ g(t) \}_{ t \in [0,1]}$ by Mantoulidis and Schoen  in the  lemma below
for the purpose of later obtaining estimates on $\eta(g)$.

\begin{lemma}[\cite{M-S}]  \label{lem-ht-2-gt}
Given $\{ h(t) \}_{t \in [0,1]} $  satisfying (i), (ii) and (iii') above,
there exists $\{ g(t) \}_{t \in [0,1]} $ satisfying (i), (ii) and (iii).
\end{lemma}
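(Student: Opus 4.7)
The plan is to apply Moser's trick on area forms. Given the path $\{h(t)\}$ with constant total area, I would construct a smooth family of diffeomorphisms $\varphi_t : \Sigma \to \Sigma$, with $\varphi_0 = \mathrm{id}$, such that $\varphi_t^* d\sigma_{h(t)} = d\sigma_{h(0)}$ for all $t$. The desired path is then $g(t) := \varphi_t^* h(t)$. Since the area form of $g(t)$ equals $\varphi_t^* d\sigma_{h(t)} = d\sigma_{h(0)}$, which is independent of $t$, we get $\frac{d}{dt} d\sigma_{g(t)} = 0$, which is equivalent to $\tr_{g(t)} g'(t) = 0$. The remaining properties are preserved under diffeomorphism pullback: $g(0) = h(0)$ is isometric to $g$; $g(1) = \varphi_1^* h(1)$ has constant Gauss curvature (hence is round) by the naturality of $K$; and $K(g(t))(x) = K(h(t))(\varphi_t(x)) > 0$, so (i) and (ii) persist.

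To build $\varphi_t$, I would follow Moser's classical argument. Setting $\omega_t = d\sigma_{h(t)}$, the condition $\varphi_t^* \omega_t = \omega_0$ differentiates, via Cartan's formula and $d\omega_t = 0$, into the requirement $d(\iota_{X_t} \omega_t) = -\dot\omega_t$ on the generating vector field $X_t$. Since $\int_\Sigma \omega_t$ is constant in $t$, we have $\int_\Sigma \dot\omega_t = 0$, so $\dot\omega_t$ is exact on $S^2$ (as $H^2(S^2; \mathbb{R}) \cong \mathbb{R}$). Fix any background round metric on $\Sigma$ and let $G$ be its Green's operator on forms; then $\alpha_t := d^* G \dot\omega_t$ provides a smooth family of primitives with $d\alpha_t = \dot\omega_t$. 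Because $\omega_t$ is a nondegenerate top-degree form on the surface $\Sigma$, the equation $\iota_{X_t} \omega_t = -\alpha_t$ is uniquely and smoothly solvable for $X_t$, and the flow of $X_t$ starting from the identity yields the required $\varphi_t$.

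The step I expect to be the only nontrivial one is checking smooth joint dependence of $\alpha_t$, $X_t$, and $\varphi_t$ on $t \in [0,1]$. This is not deep, but deserves care: the Hodge-theoretic formula $\alpha_t = d^* G \dot\omega_t$ inherits smoothness in $t$ from the smoothness of $t \mapsto \dot\omega_t$ together with continuity of $d^*G$ on suitable Sobolev spaces, and elliptic regularity then upgrades this to smoothness of $X_t$. With $X_t$ smooth in $(t,x)$ on the compact surface $\Sigma$, ODE theory produces a smooth family of diffeomorphisms $\varphi_t$, completing the construction and hence the lemma.
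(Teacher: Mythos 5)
Your proposal is correct and is essentially the same argument as the paper's: both are Moser-trick constructions that set $g(t) = \varphi_t^* h(t)$ and solve, at each $t$, a linear elliptic problem producing the generating vector field $X_t$, with solvability coming from the zero-mean condition $\int_\Sigma \tr_{h(t)} h'(t)\, d\sigma_{h(t)} = 0$ guaranteed by (iii'). The only cosmetic difference is that the paper takes $X_t = \nabla_{h(t)} u$ with $\Delta_{h(t)} u = -\tfrac12 \tr_{h(t)} h'(t)$ solved against the time-varying metric, whereas you produce a primitive $\alpha_t$ of $\dot\omega_t$ via Hodge theory relative to a fixed background metric; both are valid and give the same conclusion.
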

\begin{proof}
Let $ \nabla_{h(t)}$,  $ \Delta_{ h(t)}$ denote the gradient, the Laplacian  on $(\Sigma, h(t))$, respectively.
Given a $1$-parameter family of diffeomorphisms $\{ \phi_t \} $ on $ \Sigma$, define
$ g(t) : = \phi_t^* (h(t) ) $.
Then
\be  \label{eq-gpt}
g'(t) = \phi_t^* ( h'(t)) + \phi_t^* \left( L_X  h(t)  \right),
\ee
\be
\tr_{g(t)} g'(t) = \phi_t^* \left( \tr_{h(t)} \left( h'(t) + L_X h(t) \right) \right) ,
\ee
where $ X = X(x, t)$ is the vector field  satisfying $ \frac{d}{dt} \phi_t  = X ( \phi_t , t) $
and $L$ denotes the Lie derivative on $\Sigma$.
Thus, to satisfy (iii), it suffices to demand
$  \tr_{h(t)} L_X h(t)   =  - \tr_{ h(t) } h'(t) $, i.e.
\be \label{eq-X}
\div_{h(t)} X = - \frac12 \tr_{ h(t) } h'(t)  .
\ee
A way to pick such an $X$ is to assume $ X  = \nabla_{h(t)} u $ for some function $u = u(x,t)$
satisfying
 \be \label{eq-u}
 \Delta_{h(t)} u  =   - \frac12 \tr_{ h(t) } h'(t)  \ \mathrm{and} \ \  \int_{\Sigma} u \ d \sigma_{h(t)} = 0 .
 \ee
Since
$$ \int_\Sigma \tr_{ h(t) } h'(t)  d \sigma_{h(t)} = 0 $$
by (iii'),
  \eqref{eq-u} has a unique solution $u$ that depends smoothly on $t$ whenever $h(t)$ is smooth on $t$.
This finishes the proof.
\end{proof}

Given any smooth  path  $\{ g(t) \}_{ t \in [0,1] }$ with properties (i), (ii) and (iii),
%as in \eqref{eq-df-beta} and \eqref{eq-df-alpha}, 
let
\bee \label{eq-df-beta-intro}
\beta_{ \{ g(t) \} } : = \min_{ t \in [0,1], x \in \Sigma} \frac{1}{4\pi} | \Sigma  |_{g(t)}  K ( g(t) ) (x)  
\eee
and
\bee \label{eq-df-alpha-intro}
\alpha_{ \{ g(t) \} }   :  = \max_{ t \in [0,1], x \in \Sigma} \frac{1}{4} | g'|^2_g ( t, x)  , 
\eee
where $ | g'|^2_g $ denotes  the square  norm of $g' (t)$ with respect to $g(t)$.
\begin{definition} \label{df-eta}
Given a metric $g$ with positive Gauss curvature on $\Sigma = S^2$, define
\bee
\eta (g) : = \sup_{ \{ g(t) \} } \frac{ \beta_{ \{ g(t) \} } }{ \alpha_{ \{ g(t) \} } }  ,
%\ \ \mathrm{and} \ \ 
%\kappa (g) : = \sup_{ \{ g(t) \} } \frac{ \beta_{ \{ g(t) \} } }{ 1 +  \alpha_{ \{ g(t) \} } }  ,
\eee
where the supremum is taken over all paths  $\{ g(t) \}_{ t \in [0,1] }$
satisfying (i), (ii) and (iii).
Similarly, one may also define 
$$ 
\kappa (g) : = \sup_{ \{ g(t) \} } \frac{ \beta_{ \{ g(t) \} } }{ 1 +  \alpha_{ \{ g(t) \} } }  .
$$
\end{definition}
Clearly, $\eta (g)$ and $ \kappa (g)$ satisfy 
\bee
 0 < \eta (g)  \le \infty \ \ \mathrm{and} \ \ 
0 < \kappa (g)  \le 1 ,
\eee
where the second inequality follows from  \eqref{eq-GB-beta}. Moreover, 
for constant $c>0$, it is  straightforward to check that
\be
  \eta ( c^2 g) = \eta (g)  \ \ \mathrm{and} \ \  \kappa ( c^2 g) = \kappa (g) . 
  \ee
If $g = g_o$ is a round metric, 
by taking  $\{ g(t) \}$ to be a constant path,  one has  $ \alpha_{ \{ g(t) \} } = 0 $
and $\beta_{ \{ g(t) \} } = 1 $,  hence  
\be
 \eta (g_o) = \infty \ \ \mathrm{and} \ \   \kappa (g_o) = 1 . 
 \ee
Below, we give   a lower bound of $\eta (g)$ and $ \kappa (g)$  for  $g$ that is close to a round metric.

\begin{prop}  \label{prop-est-eta}
Let $ g_*$ be  the standard metric of area $4 \pi$ on $ \Sigma = S^2$.
There exists a constant  $ \epsilon_0 > 0 $ such that if
$
 || g - g_* ||_{C^{2, \delta} (\Sigma)} < \epsilon_0 ,
 $
then
\be \label{eq-est-eta}
 \eta (g) \ge \frac{C}{ || g - g_* ||^2_{C^{0, \delta} (\Sigma)  } } 
 \ \ \mathrm{and} \ \ 
 \kappa (g) \ge  1 - C  || g - g_*   ||_{C^{2,\delta} (\Sigma) }  .
 \ee
Here $C$ is some positive  constant that is independent on $g$ and
$ || \cdot ||_{C^{k, \delta} (\Sigma) } $ is the $C^{k, \delta}$ norm on $(\Sigma, g_*)$ for an integer $k \ge 0$
and a constant  $ \delta \in (0,1)$.
\end{prop}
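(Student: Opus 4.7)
The plan is to construct an explicit admissible path $\{g(t)\}_{t\in[0,1]}$ by area-normalized linear interpolation followed by the Mantoulidis--Schoen correction of Lemma \ref{lem-ht-2-gt}, and then to estimate $\beta_{\{g(t)\}}$ and $\alpha_{\{g(t)\}}$ directly in terms of $\|g - g_*\|_{C^{k,\delta}}$ for $k=0,2$.

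First I would set $\hat h(t) = (1-t) g + t g_*$ and
\[ h(t) = \frac{|\Sigma|_g}{|\Sigma|_{\hat h(t)}}\, \hat h(t) . \]
For $\epsilon_0$ sufficiently small, $\hat h(t)$ is positive definite, $h(t)$ has constant area $|\Sigma|_g$, $h(1)$ is round, and positivity of Gauss curvature persists along the path, so $\{h(t)\}$ satisfies (i), (ii), (iii'). Straightforward pointwise computations on this interpolation give, uniformly in $t$,
\[ \|h(t) - g_*\|_{C^{2,\delta}(\Sigma)} + \|h'(t)\|_{C^{0,\delta}(\Sigma)} \le C\|g - g_*\|_{C^{2,\delta}(\Sigma)}. \]
Next, applying Lemma \ref{lem-ht-2-gt} yields diffeomorphisms $\phi_t$ and a path $g(t) = \phi_t^* h(t)$ satisfying (i), (ii), (iii). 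Since $K(g(t)) = K(h(t))\circ \phi_t$ and $|\Sigma|_{g(t)} = |\Sigma|_g$, and since $K$ is a smooth nonlinear second-order differential operator in the metric with $K(g_*) = 1$,
\[ \beta_{\{g(t)\}} = \min_{t,x} \frac{|\Sigma|_g}{4\pi} K(h(t))(x) \ge 1 - C\|g - g_*\|_{C^{2,\delta}(\Sigma)}. \]

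For $\alpha_{\{g(t)\}}$, recall from the proof of Lemma \ref{lem-ht-2-gt} that $g'(t) = \phi_t^*\bigl(h'(t) + L_X h(t)\bigr)$ where $X = \nabla_{h(t)} u$ and $u$ is the mean-zero solution of $\Delta_{h(t)} u = -\tfrac{1}{2}\tr_{h(t)} h'(t)$. Standard Schauder theory on $(S^2, g_*)$, combined with a perturbation argument for $\Delta_{h(t)}$ on the mean-zero subspace (valid because $h(t)$ stays $C^{2,\delta}$-close to $g_*$, where $\Delta_{g_*}$ is an isomorphism between the mean-zero subspaces of $C^{2,\delta}$ and $C^{0,\delta}$), provides a $t$-uniform estimate
\[ \|u\|_{C^{2,\delta}(\Sigma)} \le C \|\tr_{h(t)} h'(t)\|_{C^{0,\delta}(\Sigma)} \le C\|h'(t)\|_{C^{0,\delta}(\Sigma)}, \]
and hence $\|L_X h(t)\|_{C^0(\Sigma)} \le C\|h'(t)\|_{C^{0,\delta}(\Sigma)}$. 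By diffeomorphism invariance of tensor norms, $|g'(t)|_{g(t)} = |h'(t) + L_X h(t)|_{h(t)}\circ \phi_t \le C\|h'(t)\|_{C^{0,\delta}(\Sigma)} \le C\|g - g_*\|_{C^{0,\delta}(\Sigma)}$, so
\[ \alpha_{\{g(t)\}} \le C\|g - g_*\|_{C^{0,\delta}(\Sigma)}^2. \]
Combining the bounds on $\beta_{\{g(t)\}}$ and $\alpha_{\{g(t)\}}$ with Definition \ref{df-eta}, and taking $\epsilon_0$ small enough that $\beta_{\{g(t)\}} \ge \tfrac{1}{2}$,
\[ \eta(g) \ge \frac{\beta_{\{g(t)\}}}{\alpha_{\{g(t)\}}} \ge \frac{C}{\|g - g_*\|_{C^{0,\delta}(\Sigma)}^2}, \qquad \kappa(g) \ge \frac{\beta_{\{g(t)\}}}{1 + \alpha_{\{g(t)\}}} \ge 1 - C\|g - g_*\|_{C^{2,\delta}(\Sigma)}, \]
which is \eqref{eq-est-eta}. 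The main technical point is producing the Schauder estimate for $u$ with constants independent of $t$; this is handled by viewing $\Delta_{h(t)}$ as a $C^{0,\delta}$-small perturbation of $\Delta_{g_*}$ on the mean-zero subspace and absorbing the difference into the Schauder constant.
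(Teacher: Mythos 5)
Your proposal is correct and follows essentially the same route as the paper: the linear interpolation $\hat h(t)=(1-t)g+tg_*$ is algebraically identical to the paper's $\tilde h(t)=g_*+(1-t)\tau$ with $\tau=g-g_*$, the area normalization matches, and the bounds on $\beta$ and $\alpha$ via Gauss-curvature continuity, the Schauder estimate for the correction potential $u$, and the $L_Xh$ term are exactly the paper's chain of estimates. The only cosmetic difference is that you spell out the perturbation argument making the Schauder constant $t$-uniform, which the paper leaves implicit.
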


\begin{proof}
Given any $ \epsilon > 0 $, let $ U_\epsilon $ be  the set of metrics $g$  satisfying 
$ || g - g_* ||_{C^{2, \delta} (\Sigma)} < \epsilon $.
First, choose a small $\epsilon_0   $ so that elements  in $ U_{\epsilon_0} $ all have  positive Gauss curvature.

Given any  $g \in U_{\epsilon_0}$, let  $ \tau = g - g_*$. 
Then  $ || \tau ||_{C^{2,\delta}(\Sigma) } < \epsilon_0 $.
For  each $ t \in [0,1]$,
define $ \tilde h (t)$, $ a(t)$ and $ h(t)$, respectively  by 
\be \label{eq-df-of-tht}
\tilde h (t) = g_* + ( 1 - t ) \tau , \ \ 
| \Sigma |_{\tilde h (t)} =  a(t) | \Sigma |_g , \  \
h(t) = a^{-1}(t) \tilde h(t) .
\ee
Then
$ | \Sigma |_{h(t)} = a^{-1}(t) | \Sigma |_{ \tilde h (t)} = | \Sigma |_g $.
Hence, $ \{ h(t) \}_{t \in [ 0,1] } $ is a path satisfying properties (i), (ii) and (iii').
Moreover,
\be \label{eq-tht-g*}
|| \tilde h (t) - g_* ||_{C^{2, \delta} (\Sigma) } \le  ||  \tau ||_{C^{2,\delta} (\Sigma) } , \ \ 
| a(t) - 1 | \le C_1 || \tau ||_{C^{2, \delta} (\Sigma)} ,    
\ee
and
\be \label{eq-ht-g*}
\begin{split}
& \ || h(t) - g_* ||_{C^{2,\delta} (\Sigma) } \\
 = & \  || a^{-1}(t) ( 1 - t ) \tau + ( a^{-1} (t) - 1 ) g_* ||_{C^{2, \delta} (\Sigma)}  \\
\le & \  C_2 ||  \tau ||_{C^{2,\delta} (\Sigma) } .
\end{split}
\ee
Here and below, $C_1$, $C_2$, $ ... $  always  denote  constants that  do not depend  on $ \tau $ and $t$.

Now  let $ \{ g (t ) \}_{ t \in [0,1] } $ be the path of metrics constructed from $\{ h(t) \}_{ t \in [0,1] }$ in the proof of Lemma \ref{lem-ht-2-gt}.
It follows from  \eqref{eq-ht-g*} and the fact $ g(t) = \phi_t^* ( h(t) ) $ that
\be \label{eq-est-beta}
\beta_{ \{ g(t) \} } = \frac{ | \Sigma |_g} { 4 \pi}  \min_{ t \in [0,1],  x \in \Sigma} K ( h(t) ) (x)  \ge 1 - C_3 ||  \tau ||_{C^{2,\delta} (\Sigma) } .
\ee
We next estimate $ \alpha_{ \{ g(t) \} }$.
 By \eqref{eq-gpt}, 
 $ g'(t) = \phi_t^* ( H(t) ), $
where
$$ H(t) = h'(t) + L_X h(t)  .$$
Hence,
$
| g' |^2_g = \phi_t^* ( | H |_{h}^2 ) .
$
Therefore,
\be \label{eq-alpha-est-1}
\begin{split}
\alpha_{ \{ g(t) \} } =  & \  \max_{ t \in [0,1], x \in \Sigma} \frac{1}{4} | H |^2_h ( t, x)  \\
\le & \  \max_{ t \in [0,1], x \in \Sigma} \frac{1}{2}  \left[ | h' |_h^2 + | L_X h(t) |_h^2 \right]  ( t, x)  .
\end{split}
\ee
Plugging  in $ X = \nabla_{h(t) }u$, we have
\be \label{eq-LX-h-1}
L_X h (t) = 2 \nabla^2_{h(t)} u,
\ee
where $ \nabla^2_{h(t)} $ denotes the Hessian  on $(\Sigma, h(t))$.
By \eqref{eq-u}, \eqref{eq-ht-g*} and the standard linear elliptic estimates,
we have
\be \label{eq-schauder}
|| u ||_{C^{2, \delta} (\Sigma )} \le C_4   || \tr_{ h(t) } h'(t) ||_{C^{0, \delta} (\Sigma) } .
\ee
Therefore, by  \eqref{eq-LX-h-1} and \eqref{eq-schauder},
\be \label{eq-LXh-2}
| L_X h(t) |_{h} \le C_5     || \tr_{ h(t) } h'(t) ||_{C^{0, \delta}(\Sigma) } .
\ee
It follows from \eqref{eq-alpha-est-1} and \eqref{eq-LXh-2} that
\be \label{eq-alpha-3}
\alpha_{\{ g(t) \} }  \le \max_{ t \in [0,1], x \in \Sigma} \frac{1}{2}   | h' |_h^2 (t, x) +
\max_{ t \in [0,1]}  C_6   || \tr_{ h(t) } h'(t) ||^2_{C^{0, \delta}(\Sigma) }  .
\ee
By \eqref{eq-df-of-tht}, we have
\be \label{eq-ht-hpt}
\tr_{ h(t) } h'(t) =  -  2 a^{-1} a' -  \tr_{ \tilde h(t) } \tau ,
\ee
\be
\begin{split}
| h' |_{h}^2  = & \  2 a^{-2} (a')^2  +  | \tau |_{\tilde h}^2 + 2 a^{-1} a' \tr_{\tilde h(t)} \tau ,
\end{split}
\ee
\be \label{eq-apt}
 a'(t)    = - \frac{1}{2 | \Sigma |_g }   \int_{\Sigma}  \tr_{\tilde h (t) } \tau d \sigma_{\tilde h(t)}  .
\ee
Thus,   by  \eqref{eq-tht-g*} and   \eqref{eq-ht-hpt} -- \eqref{eq-apt},  we have 
\be \label{eq-final-1}
 | h' |_{h}^2  \le C_7  || \tau ||^2_{C^0(\Sigma)} \ \ \mathrm{and} \ \ 
|| \tr_{ h(t) } h'(t)  ||^2_{C^{0, \delta} (\Sigma)} \le C_8 || \tau  ||^2_{C^{0, \delta} (\Sigma)} .
\ee
Finally, by \eqref{eq-alpha-3} and  \eqref{eq-final-1}, we conclude
\be \label{eq-est-alpha}
\alpha_{ \{ g(t) \} } \le C_9 || \tau ||^2_{C^{0, \delta} (\Sigma)}  .
\ee
Estimate \eqref{eq-est-eta}  then  follow readily from  \eqref{eq-est-beta} and  \eqref{eq-est-alpha}. 
\end{proof}

We now give  the proof  of Theorems \ref{thm-main-1} -- \ref{thm-main-2}.

\begin{proof}[Proof of Theorems \ref{thm-main-1} and \ref{thm-main-1-1}]
It suffices to assume that  $g$ is not a round metric.
Let  $ \{ g^{(j)}(t) \}_{ t \in [0,1] }$, $ j = 1, 2, ... $, be a sequence of path of metrics,  satisfying (i), (ii) and (iii), such that
$$
\frac{ \beta_{ \{ g^{(j)} (t) \} } }{ \alpha_{ \{ g^{(j)} (t)  \} } }   \rightarrow \eta (g), \ \mathrm{as}  \ j \to \infty.
$$
Suppose $ \mathcal{W} < \eta (g)$, then
$$ \mathcal{W} <  \frac{ \beta_{ \{ g^{(j)} (t) \} } }{ \alpha_{ \{ g^{(j)} (t)  \} } }, \ \mathrm{for \ large} \  j . $$
For these $j$,  by Theorem \ref{thm-section-m-n} and Remark \ref{rem-empty-sh}, 
\bee
\frac12 r_o
\left[    \frac{    \mathcal{W} } { { \alpha_{ \{ g^{(j)} (t)  \} }}^{-1} \beta_{ \{ g^{(j)} (t)  \} }-   \mathcal{W}  }  \right]^\frac12
+ \mh (\So) \ge \sqrt{ \frac{ | \Sh |}{16 \pi} } , \  \ \mathrm{when} \ \Sh \neq \emptyset 
\eee
and
\bee
\frac12 r_o
\left[    \frac{    \mathcal{W} } { { \alpha_{ \{ g^{(j)} (t)  \} }}^{-1} \beta_{ \{ g^{(j)} (t)  \} }-   \mathcal{W}  }  \right]^\frac12
+ \mh (\So) \ge  0  , \  \ \mathrm{when} \ \p \Omega = \So.
\eee
Taking $ j \to \infty$, Theorems \ref{thm-main-1} and \ref{thm-main-1-1} follow.
\end{proof}

\begin{proof}[Proof of Theorem \ref{thm-main-2}]
Assume that $g$ is not a round metric. 
Pick  any  path $\{ g(t) \}_{ t \in [0,1] }$ used in Section \ref{sec-app} and
choose $ \alpha_g$, $\beta_g$ to be $ \alpha $, $ \beta$ associated to that path, respectively. 
Theorem \ref{thm-main-2} then  follows directly  from \eqref{eq-m-equal-0}   in  Theorem \ref{thm-section-m-p}.
\end{proof}

It would be desirable to improve  Theorem \ref{thm-main-2} in a way that Theorem \ref{thm-main-1} is 
proved  from Theorem \ref{thm-section-m-n}.  However, due to 
the fact that \eqref{eq-m-equal-0} involves both $ \frac{ \beta}{1 + \alpha} $ and $\frac{\alpha}{1 + \alpha}$,
we can only  replace $ \frac{ \beta}{1 + \alpha} $ by $\kappa(g)$ at the expense of  giving up  $\frac{\alpha}{1 + \alpha}$. 
 We record the following theorem. 

\begin{thm} \label{thm-main-2-1}
Let $ \Omega$ be a compact, orientable, Riemannian $3$-manifold with boundary $ \p \Omega$.
Suppose $ \p \Omega$ is  the disjoint union of  $ \Sigma_o $ 
and $\Sh$ such that 
\begin{itemize}
\item[(a)] $\Sigma_o$ is  a topological $2$-sphere with constant mean curvature $H_o > 0$; 
\item[(b)] $\Sh$, which may have multiple components,  is a minimal surface; and
\item[(c)] there are no other closed minimal surfaces in $\Omega$.
\end{itemize}
Suppose  $\Omega $ has nonnegative scalar curvature and  the induced metric $g$ on $ \Sigma_o$ 
 has positive Gauss curvature.
 Let $ 0 < \kappa(g)  \le 1 $ be the scaling invariant of $(\So, g)$ defined  in Definition \ref{df-eta}. 
If
\bee 
\mathcal{W}  \coloneqq \frac{1}{16 \pi} \int_{\Sigma_o} H_o^2 d \sigma  < \kappa(g) ,
\eee
then
\be \label{eq-main-2-1} 
\sqrt{ \frac{ | \Sh |}{16 \pi} } \le  
\left[ \left(  \frac{  \mathcal{W} } {  \kappa (g)  -   \mathcal{W} }  \right)^\frac12
+ 1 \right]   \mh (\So) .
\ee
\end{thm}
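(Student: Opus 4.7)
The plan is to deduce Theorem \ref{thm-main-2-1} from inequality \eqref{eq-m-equal-0} of Theorem \ref{thm-section-m-p} by a purely algebraic manipulation, followed by taking the supremum over admissible paths of metrics in the sense of Definition \ref{df-eta}. I first handle the round case: if $g$ is a round metric, a constant path gives $\alpha = 0$ and $\beta = 1$, so $\kappa(g) = 1$ and \eqref{eq-m-equal-0} already yields $\mh(\Sigma_o) \ge \sqrt{|\Sigma_h|/(16\pi)}$; adding the nonnegative quantity $[\mathcal{W}/(\kappa(g)-\mathcal{W})]^{1/2}\mh(\Sigma_o)$ to the left side preserves the inequality. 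Hence I may assume from now on that $g$ is not round.

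Next I would fix an arbitrary smooth path $\{g(t)\}_{t\in[0,1]}$ of metrics on $\Sigma_o$ satisfying (i), (ii), (iii) in Section \ref{sec-eta}, with associated constants $\alpha = \alpha_{\{g(t)\}}$ and $\beta = \beta_{\{g(t)\}}$. Using the identity $\mathcal{W} = \frac{1}{4}H_o^2 r_o^2$ (which holds because $|\Sigma_o| = 4\pi r_o^2$ and $H_o$ is constant), inequality \eqref{eq-m-equal-0} of Theorem \ref{thm-section-m-p} reads
\begin{equation*}
\left[\frac{\alpha\, \mathcal{W}}{\beta - (1+\alpha)\mathcal{W}}\right]^{1/2} \mh(\Sigma_o) + \mh(\Sigma_o) \ge \sqrt{\frac{|\Sigma_h|}{16\pi}},
\end{equation*}
provided $\mathcal{W} < \beta/(1+\alpha)$. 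Dividing both the numerator and the denominator of the fraction by $(1+\alpha)$ and using the trivial bound $\alpha/(1+\alpha) \le 1$, I obtain
\begin{equation*}
\frac{\alpha\, \mathcal{W}}{\beta - (1+\alpha)\mathcal{W}} \;=\; \frac{\tfrac{\alpha}{1+\alpha}\,\mathcal{W}}{\tfrac{\beta}{1+\alpha} - \mathcal{W}} \;\le\; \frac{\mathcal{W}}{\tfrac{\beta}{1+\alpha} - \mathcal{W}},
\end{equation*}
which, substituted back, gives the path-dependent inequality
\begin{equation*}
\left[\frac{\mathcal{W}}{\tfrac{\beta}{1+\alpha}-\mathcal{W}}\right]^{1/2}\mh(\Sigma_o) + \mh(\Sigma_o) \ge \sqrt{\frac{|\Sigma_h|}{16\pi}}.
\end{equation*}

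Finally I would choose a sequence of admissible paths $\{g^{(j)}(t)\}_{t\in[0,1]}$, $j=1,2,\ldots$, such that $\beta_j/(1+\alpha_j) \to \kappa(g)$ as $j\to\infty$, where $\alpha_j, \beta_j$ are the constants of the $j$-th path. Because $\mathcal{W} < \kappa(g)$ by assumption, for all sufficiently large $j$ one has $\mathcal{W} < \beta_j/(1+\alpha_j)$, so the inequality above applies along the sequence. Letting $j\to\infty$ then yields \eqref{eq-main-2-1}. The argument is entirely elementary and has no real obstacle; the only conceptual point is that one deliberately discards the factor $\alpha/(1+\alpha)$ in the numerator so that the resulting bound depends solely on the single scaling invariant $\kappa(g)$, at the cost of a slightly weaker coefficient, precisely as indicated in the remark preceding the theorem.
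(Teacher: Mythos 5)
Your proposal is correct and follows essentially the same route as the paper: the authors also deduce Theorem~\ref{thm-main-2-1} by applying \eqref{eq-m-equal-0-r} (which they obtained from \eqref{eq-m-equal-0} precisely by the bound $\alpha/(1+\alpha)\le 1$) along a sequence of admissible paths realizing $\kappa(g)$ in the supremum; you simply carry out that algebraic discard inline. The one point worth making explicit, which both you and the paper leave implicit, is that replacing the coefficient by a larger one preserves the inequality because $\mh(\Sigma_o)\ge 0$, which itself follows from \eqref{eq-m-equal-0} since its right-hand side is nonnegative and $1+[\cdots]^{1/2}>0$.
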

\begin{proof}
If $g$ is round, we have $ \sqrt{ \frac{ | \Sh |}{16 \pi} } \le   \mh (\So) $, in particular  \eqref{eq-main-2-1} holds.
So we assume that $g$ is not a round metric. Similar to the proof of Theorem \ref{thm-main-1} above, 
let  $ \{ g^{(j)}(t) \}_{ t \in [0,1] }$, $ j = 1, 2, ... $, be a sequence of path of metrics,  satisfying (i), (ii) and (iii), with 
$$
\frac{ \beta_{ \{ g^{(j)} (t) \} } }{ 1 + \alpha_{ \{ g^{(j)} (t)  \} } }   \rightarrow \kappa (g), \ \mathrm{as}  \ j \to \infty.
$$
Suppose $ \mathcal{W} < \kappa (g)$, then
$$ \mathcal{W} <  \frac{ \beta_{ \{ g^{(j)} (t) \} } }{ 1 + \alpha_{ \{ g^{(j)} (t)  \} } }, \ \mathrm{for \ large} \  j . $$
For these $j$,  by \eqref{eq-m-equal-0-r} in Theorem \ref{thm-section-m-p}, 
 \be \label{eq-app-j-r}
  \left[  \frac{ \mathcal{W} } 
  {  \frac{ \beta_{ \{ g^{(j)} (t)  \} }  }{ 1 + \alpha_{ \{ g^{(j)} (t)  \} } }  -  \mathcal{W} } \right]^\frac12   \mh (\So)    + \mh (\So) \ge \sqrt{ \frac{ | \Sh |}{16 \pi} } .
\ee
Taking $j \to \infty$, Theorem  \ref{thm-main-2-1} follows.
\end{proof}

To  end  this paper, we remark that, besides employing the construction of  
Mantoulidis and Schoen in Lemma \ref{lem-ht-2-gt},
there are other methods to obtain $\{ g(t) \}_{ t \in [0,1] }$  
satisfying (i), (ii) and (iii) used  in  Definition \ref{df-eta}.
For instance, one may apply Hamilton's modified Ricci flow \cite{Ham} on closed surfaces. 
Using results from \cite{Ham, Chow},
 Lin and Sormani \cite{L-S14} introduced a concept of asphericity mass
 for a CMC  surface normalized to have area $4\pi$ and used it to  obtain upper bounds of the surface's Bartnik mass.
It would be  interesting to understand the relation between $ \eta (g)$ or $\kappa(g)$  and the asphericity mass since 
they are all determined  solely  by the intrinsic metric on the surface.
It is also conceivably possible that the modified Ricci flow \cite{Ham} may be used to obtain refined estimates of $\eta(g)$
and $\kappa(g)$. We leave these for interested readers.

\vspace{.2cm}

\begin{appendices}

\makeatletter
\def\@seccntformat#1{Appendix\ \csname the#1\endcsname\quad}
\makeatother

\section{} \label{appen}
In this appendix, we give  the calculation, stated in  Remark \ref{rem-minimizing-m},  which  minimizes the left side
of  \eqref{eq-g-result-m-n} and \eqref{eq-g-result-m-p},
subject to the condition
$m $ satisfies \eqref{eq-choice-of-m-n} and
\eqref{eq-eq-H-condition-m-p-p}, respectively.

We first consider the context of Theorem \ref{thm-section-m-p}. Suppose $ \alpha > 0 $.
Let $ \W = \frac14 H_o^2 r_o^2 $ and define
\be \label{eq-A-kappa}
\kappa : = \frac{\beta}{1 + \alpha} \in (0, 1) .
\ee
Condition \eqref{eq-H-condition-m-p} becomes
$ 
\W < \kappa
$
and the constraint \eqref{eq-eq-H-condition-m-p-p} is
 \be \label{eq-const-2}
 \mathcal{W} < \kappa \left( 1 - \frac{2m}{r_o} \right), \ m \in [0, \frac12 r_o ) .
 \ee
The quantity that we want to minimize is
 \be
 \begin{split}
\Phi  : = & \  \frac12 r_o  \left[  \frac{ \alpha k^2} { \beta - \left( 1 +  \alpha \right) k^2 } \right]^\frac12 (1 -k^2)   \\
  = & \  \frac12 r_o  \left(  \frac{ \alpha} {  1 +  \alpha   } \right)^\frac12  \left[  \frac{  x } { \kappa -  x  } \right]^\frac12 (1 -x )
 \end{split}
 \ee
 where
 $ x: =  k^2 = \W \left( 1 - \frac{2m}{r_o} \right)^{-1} $. 
In terms of $x$, the constraint \eqref{eq-const-2} translates into
$
\W \le x < \kappa  .
$
The solution to this calculus problem   can be derived by
considering
\be
f (x) : = \left(  \frac{  x } { \kappa  - x  } \right) ( 1 - x)^2  ,
\ee
whose derivative is
$
f' (x) = \frac{ (1 - x)}{ (\kappa  - x )^2 }   \left( 2 x^2 - 3 \kappa x + \kappa   \right) .
$
We therefore  have

\vspace{.2cm}

\noindent {\bf Theorem 3.2'}
{\em
In the setting of Theorem \ref{thm-section-m-p}, suppose $\alpha > 0 $ and let $ \kappa $ be given by
\eqref{eq-A-kappa}. Then
$
\min_{  \W \le x< \kappa} \Phi (x)  + \mh (\So) \ge \sqrt{ \frac{ | \Sh |}{ 16 \pi} } ,
$
where
\begin{itemize}
\item[a)]  if $  \kappa \le \frac{8}{9}  $ or
 if  $  \kappa >  \frac{8}{9}  $ and
$ x_2 : = \frac{ 3 \kappa + \sqrt{ 9 \kappa^2 - 8 \kappa} }{4}  \le \W $,  then
$  \min_{  \W \le x< \kappa} \Phi (x)  = \Phi |_{x = \W}  ;  $
\item[b)]   if  $  \kappa >  \frac{8}{9}  $  and
$ x_1 : = \frac{ 3 \kappa - \sqrt{ 9 \kappa^2 - 8 \kappa} }{4}  \le \W < x_2 $, then
$ \min_{  \W \le x< \kappa} \Phi (x)  = \Phi |_{x = x_2 } ;  $
\item[c)]   if    $  \kappa >  \frac{8}{9}  $ and $ \W < x_1 $, then
$  \min_{  \W \le x< \kappa} \Phi (x)  =  \min \left\{ \Phi |_{x = \W}, \ \Phi |_{x = x_2 } \right\} .  $
In particular, since $ \Phi |_{x = x_2} $ is determined only by  $\alpha$ and $\beta$, 
$  \min_{  \W \le x< \kappa} \Phi (x)  =  \Phi |_{x = \W} $
for small $ \W$.
\end{itemize}
Here   $ x_1, x_2 \in (0, \kappa)$  are the roots to  $2x^2 - 3 \kappa x + \kappa = 0 $, and 
$$  \Phi |_{x = \W} = \Phi |_{m =0}
=  \left[  \frac{ \alpha \frac14 H_o^2 r_o^2 } { \beta - \left( 1 +  \alpha \right) \frac14 H_o^2 r_o^2  } \right]^\frac12
   \mh (\So)  .$$
}

\vspace{.3cm}

Next  we consider the context of Theorem \ref{thm-section-m-n}. Suppose $ \alpha > 0 $.
Define
\be \label{eq-A-b}
b : = \beta - \alpha \W \in (0,1),
\ee
where $\W =  \frac14 H_o^2 r_o^2 $.
The condition \eqref{eq-H-condition-m-n} becomes 
$
b > 0
$
and the constraint \eqref{eq-choice-of-m-n} is 
\be \label{eq-const-n}
b > \W \left( 1 - \frac{2m}{r_o} \right)^{-1} , \ m < 0 .
\ee
The quantity that we  want to minimize is
 \be
 \begin{split}
\Psi  : = & \  \frac12 r_o  \left[  \frac{ \alpha \W } { ( \beta -  \alpha \W) - k^2 } \right]^\frac12 (1 -k^2)   \\
  = & \    \frac12 r_o  \left( \alpha \W \right)^\frac12  \left[  \frac{ 1 } { b  - x } \right]^\frac12 (1 -x )   \\
 \end{split}
 \ee
 where
 $ x: =  k^2 = \W \left( 1 - \frac{2m}{r_o} \right)^{-1} $.
There are two cases to consider when interpreting  the constraint.
If $ b < \W  $,   \eqref{eq-const-n}  translates into
$
0 <   x < b   .
$
If $ \W \le b  $,  \eqref{eq-const-n}  translates into
$
0 <   x < \W    .
$
In either case,
the solution to this calculus problem   can be derived by
considering
\be
\tilde f (x) : = \left(  \frac{  1  } { b  - x  } \right) ( 1 - x)^2  ,
\ee
whose derivative is
$
\tilde f' (x) = \frac{ (1 - x)}{ (b  - x )^2 }   \left[  x  - ( 2 b  - 1)    \right] .
$
We therefore  have

\vspace{.2cm}

\noindent {\bf Theorem 3.1'}
{\em
In the setting of Theorem \ref{thm-section-m-n}, suppose   $\alpha > 0 $ and let $b$ be  given by \eqref{eq-A-b}.
\begin{enumerate}
\item If $ b < \W$, then
$
\min_{ 0 < x< b } \Psi   + \mh (\So) \ge \sqrt{ \frac{ | \Sh |}{ 16 \pi} } ,
$
where
\vh
\begin{itemize}
\item[a)]  if $ b \le  \frac12  $,
$ \min_{ 0 < x< b } \Psi   = \Psi |_{x = 0 +  }  ;  $
\vh
\item[b)]   if  $ b >  \frac12   $,
$ \min_{  0 < x< b } \Psi (x)  = \Psi |_{x = 2b - 1 } .  $
\end{itemize}
\vh
\item If $  \W \le b $, then
$ \min_{ 0 < x< \W  } \Psi   + \mh (\So) \ge \sqrt{ \frac{ | \Sh |}{ 16 \pi} } , $
where
\vh
\begin{itemize}
\item[a)]  if $ b \le  \frac12  $,
$ \min_{ 0 < x< \W  } \Psi   = \Psi |_{x = 0 +  }  ;  $
\vh
\item[b)]   if  $   \frac12 < b < \frac{1 + \W}{2}  $,
$ \min_{  0 < x< \W  } \Psi (x)  = \Psi |_{x = 2b - 1 } ;   $
\vh
\item[c)]   if  $    b \ge  \frac{1 + \W}{2}  $,
$ \min_{  0 < x< \W  } \Psi (x)  = \Psi |_{x = \W -  } .   $
\end{itemize}
\end{enumerate}
Here
$$  \Psi |_{x = 0 + } : = \lim_{x \rightarrow 0+ } \Psi = \lim_{m  \rightarrow - \infty  } \Psi =
\frac12 r_o
\left[    \frac{    \frac14 H_o^2  r_o^2 \alpha  } {   \beta-   \frac14 H_o^2  r_o^2  \alpha  }  \right]^\frac12  $$
and
$$  \Psi |_{x = \W -  } : = \lim_{x \rightarrow \W -  } \Psi = \lim_{m  \rightarrow  0 -   } \Psi
=  \left[  \frac{ \alpha \frac14 H_o^2 r_o^2 } { \beta - \left( 1 +  \alpha \right) \frac14 H_o^2 r_o^2  } \right]^\frac12
   \mh (\So)   .
$$
}

It follows from {Theorem 3.1'}  and {Theorem 3.2'}  (2) that, if
$ \W < \frac{\beta}{1 + \alpha} $,
there are cases, depending on $\W$, $\alpha$ and $\beta$, in which
the optimal values of  $ \Phi$ and $ \Psi$ both occur  at $m=0$ and they agree.
\end{appendices}

\end{document}